\documentclass[12pt,reqno]{amsart}
\usepackage{etoolbox}
\makeatletter
\patchcmd\maketitle
{\uppercasenonmath\shorttitle}
{}
{}{}
\patchcmd\maketitle
{\@nx\MakeUppercase{\the\toks@}}
{\the\toks@}
{}
{}{}
\patchcmd\@settitle{\uppercasenonmath\@title}{\Large}{}{}
\patchcmd\@setauthors
{\MakeUppercase{\authors}}
{\authors}
{}{}
\makeatother
\usepackage{amsmath,amssymb,amsthm,amsfonts}
\usepackage{mathtools}
\usepackage{color}
\usepackage{xcolor}
\usepackage{url}
\usepackage[shortlabels]{enumitem}
\usepackage{tikz}
\usepackage{pgfplots}
\pgfplotsset{compat=1.18}
\usepgfplotslibrary{fillbetween}
\usepackage[utf8]{inputenc}
\usepackage[T1]{fontenc}
\newtheorem{theorem}{Theorem}[section]
\newtheorem{corollary}[theorem]{Corollary}
\newtheorem{proposition}[theorem]{Proposition}
\newtheorem{lemma}[theorem]{Lemma}

\newtheorem{remark}[theorem]{Remark}
\newtheorem{example}[theorem]{Example}
\numberwithin{equation}{section}
\DeclareMathOperator{\Tr}{Tr}
\DeclareMathOperator{\Rea}{Re}
\usepackage[colorlinks=true]{hyperref}
\hypersetup{urlcolor=blue, citecolor=red, linkcolor=blue}
\usepackage{cite}
\usepackage{palatino,euler}
\begin{document}
	
	\title[Spectral Rigidity of Commutators]{Spectral Rigidity of Commutators:\\ Dynamics, Resonance, and Nilpotency}
	
	\keywords{commutator, inner derivation, nilpotency, Kleinecke--Shirokov theorem, similarity orbit}
	\subjclass[2020]{47B47, 47A10, 15A27, 34A30}
	
	\author[H.\ Stankovi\'c]{Hranislav Stankovi\'c}
	\address{Faculty of Electronic Engineering, University of Ni\v s, Aleksandra Medvedeva 4, 18000 Ni\v s, Serbia}
	\email{\url{hranislav.stankovic@elfak.ni.ac.rs}}
	
	\thanks{The results of this paper were presented at the International Congress of Mathematicians, Philadelphia, July 2026.}
	
	\date{\today}
	
	\maketitle
	
	\begin{abstract}
		Let $A, T \in M_n(\mathbb{C})$ and let $\Delta_A(T) = AT - TA$ denote the inner derivation induced by $A$. We determine when $T$ and $\Delta_A(T)$ are nilpotent under the second-order relation
		
		$$
		\Delta_A^2(T) + \alpha\, \Delta_A(T) + \beta\, T = 0, \qquad \alpha, \beta \in \mathbb{R},
		$$ 
		according to the location of the roots of $z^2 + \alpha z + \beta$. If the roots have nonzero real parts of the same sign, then both $T$ and $\Delta_A(T)$ are nilpotent. If the roots are purely imaginary and nonzero, no nilpotency conclusion holds in general, whereas if one root is zero and the other is nonzero, $\Delta_A(T)$ is nilpotent but $T$ need not be. The double zero root gives the Kleinecke--Shirokov theorem. For real roots of opposite signs, the answer is governed by an arithmetic threshold: writing $z_1/z_2=-p/q$ in lowest terms, every solution is nilpotent when $p+q>n$, while for $p+q\le n$ an explicit cyclic construction yields solutions for which both $T$ and $\Delta_A(T)$ are invertible. Finally, for a relation
		
		$$
		\sum_{k=0}^{m} c_k\,\Delta_A^k(T)=0
		$$ 
		of arbitrary order, every iterated commutator $\Delta_A^j(T)$ is nilpotent whenever the roots of the associated polynomial lie in an open half-plane whose boundary passes through the origin.
	\end{abstract}

	\bigskip
	
	\section{Introduction}\label{sec:intro}
	\bigskip
	Commutator identities of the form
	\[
	[A,[A,T]] = 0
	\]
	occupy a classical place in operator theory. The Kleinecke--Shirokov theorem \cite{kleinecke, shirokov} asserts that if $A$ and $T$ are elements of a complex Banach algebra satisfying this identity, then the commutator $[A,T]$ is quasi-nilpotent, which confirmed a conjecture of Kaplansky. In the finite-dimensional algebra $M_n(\mathbb{C})$, quasi-nilpotency coincides with nilpotency, so the conclusion becomes $[A,T]^n = 0$ for some $n\in\mathbb{N}$. The two original proofs are of a rather different nature. Writing $\Delta_A(T) = [A,T] = AT - TA$ for the inner derivation induced by $A$, Kleinecke's argument \cite{kleinecke} is combinatorial and rests on the identity $\Delta_A^n(T^n) = n!\, \Delta_A(T)^n$, valid whenever $\Delta_A^2(T) = 0$, which leads to the estimate
	\[
	\left\lVert \Delta_A(T)^n \right\rVert \leq \frac{(2 \left\lVert A \right\rVert\left\lVert T \right\rVert)^n}{n!}\, 
	\]
	and hence to $r(\Delta_A(T)) = 0$ via the Gelfand formula (see also \cite{halmos}). Shirokov's proof \cite{shirokov}, on the other hand, is analytic in spirit and already works with the conjugation $\lambda \mapsto e^{\lambda A}\, T\, e^{-\lambda A}$. In the finite-dimensional setting one may argue instead through Jacobson's lemma \cite{jacobson}. Variants of the theorem are still being studied, among them localizations \cite{BK}, a generalization for matrices \cite{KramarKramarFijavz23}, and a version for isometric transformations \cite{Sta}.
	
	The purpose of this paper is to take this dynamical point of view, present in Shirokov's argument in its simplest instance, and to develop it systematically. Rather than treating a polynomial relation $P(\Delta_A)(T) = 0$ solely as an algebraic constraint, we regard it as a differential equation along the similarity orbit
	\[
	F(\lambda) = e^{\lambda A}\, T\, e^{-\lambda A}, \qquad \lambda \in \mathbb{R}.
	\]
	The identity $F^{(k)}(\lambda) = e^{\lambda A}\, \Delta_A^k(T)\, e^{-\lambda A}$ (Lemma~\ref{lem:derivatives}) transforms such a relation into a constant-coefficient matrix differential equation, while every point of the trajectory remains similar to $T$, so the spectrum, the spectral radius, and the power traces are constant along the orbit. This creates a useful tension between analysis and algebra: the differential equation prescribes exponential, oscillatory, or polynomial growth of the orbit, whereas similarity preserves its spectral data. We use the term \emph{spectral squeezing} for the resulting mechanism: the orbit has constant spectral radius, while the differential equation forces its norm to decay along one unbounded temporal direction, so the constant radius must be zero. In other regimes, however, a nonzero spectrum may survive.
	
	More precisely, we investigate this phenomenon for the second-order relation
	\begin{equation}\label{eq:general_relation}
		\Delta_A^2(T) + \alpha\, \Delta_A(T) + \beta\, T = 0, \qquad \alpha, \beta \in \mathbb{R},
	\end{equation}
	where the location of the roots $z_1, z_2$ of the characteristic polynomial $P(z) = z^2 + \alpha z + \beta$ determines the dynamical regime. When both roots lie in the same open half-plane, the orbit decays exponentially in one temporal direction and both $T$ and $\Delta_A(T)$ are nilpotent. The degenerate configurations on the imaginary axis behave differently. For the Kleinecke--Shirokov relation $\Delta_A^2(T) = 0$ the commutator $\Delta_A(T)$ is nilpotent, whereas $T$ itself need not be, and for a nonzero purely imaginary pair of roots no nilpotency conclusion is possible in general. When one root vanishes and the other is real and nonzero, $\Delta_A(T)$ is nilpotent, while $T$ may even be invertible.
	
	The most delicate case is the hyperbolic regime $\beta < 0$, in which the roots are real and of opposite signs. The orbit then diverges in both temporal directions and spectral squeezing alone is insufficient. We show that nilpotency is in this case governed by the finite-dimensional resonance condition
	\[
	j z_1 + (k-j)\, z_2 \neq 0, \qquad 1 \leq j < k \leq n.
	\]
	Equivalently, writing $z_1/z_2 = -p/q$ in lowest terms ($p,q \in \mathbb{Z}^+$, $\gcd(p,q) = 1$), every solution of \eqref{eq:general_relation} is nilpotent when $p + q > n$, while for $p + q \leq n$ an explicit cyclic construction produces solutions that are even invertible.
	
	Finally, the one-sided squeezing argument extends to relations of arbitrary order, $\sum_{k=0}^{m} c_k\, \Delta_A^k(T) = 0$: if all roots of the associated polynomial lie in a single open half-plane, then every iterated commutator $\Delta_A^j(T)$ is nilpotent. From this viewpoint, the second-order results above treat in full the real-coefficient configurations in which the roots meet or straddle the imaginary axis, that is, the boundary cases of the half-plane principle at order two.
	
	\bigskip
	
	The paper is organized as follows. Section~\ref{sec:framework} sets up the differential framework and collects the analytic and algebraic tools. Sections~\ref{sec:imaginary}--\ref{sec:mixed} treat the cases according to the location of $z_1, z_2$ in the complex plane. Section~\ref{sec:resonance} analyzes the dimensional resonance specific to the hyperbolic case and supplies an explicit construction (Proposition~\ref{prop:construction}) realizing the non-nilpotent regime. Section~\ref{sec:higher} extends the method to higher-order relations. Section~\ref{sec:classification} collects the second-order results into a classification theorem with a phase-space reading, and Section~\ref{sec:conclusion} contains concluding remarks.
	
	\subsection*{Notation}
	Throughout, $M_n(\mathbb{C})$ denotes the algebra of $n \times n$ complex matrices with identity $I$. For $M \in M_n(\mathbb{C})$ we write $\sigma(M) \subseteq \mathbb{C}$ for its \emph{spectrum} (the set of eigenvalues) and
	\[
	r(M) := \max\{\,|\mu| : \mu \in \sigma(M)\,\}
	\]
	for its \emph{spectral radius}. The \emph{trace} of $M = [M_{ij}]_{i,j=1}^{n}$ is $\Tr(M) = \sum_{i=1}^{n} M_{ii}$, the sum of the eigenvalues of $M$ counted with multiplicity. A matrix $M$ is \emph{nilpotent} if $M^k = 0$ for some $k \geq 1$, equivalently (in $M_n(\mathbb{C})$) if $M^n = 0$, if $\sigma(M) = \{0\}$, or if $r(M) = 0$; these equivalences are used freely below. We write $\left\lVert\, \cdot\,\right\rVert$ for a submultiplicative matrix norm and $\Rea(z)$ for the real part of $z \in \mathbb{C}$. The \emph{inner derivation} induced by $A \in M_n(\mathbb{C})$ is
	\[
	\Delta_A(T) = [A,T] = AT - TA, \qquad T \in M_n(\mathbb{C}),
	\]
	and its iterates are defined by $\Delta_A^0(T) = T$ and $\Delta_A^k(T) = \Delta_A(\Delta_A^{k-1}(T))$ for $k \geq 1$. Since all norms on the finite-dimensional space $M_n(\mathbb{C})$ are equivalent, statements about convergence $X_k \to X$ and the induced convergence $r(X_k) \to r(X)$ do not depend on the choice of norm.
	
	\bigskip
	\section{The Matrix Differential Framework}\label{sec:framework}
	\bigskip
	
	Throughout this section we assume that $A, T \in M_n(\mathbb{C})$ satisfy the relation~\eqref{eq:general_relation} for fixed $\alpha, \beta \in \mathbb{R}$.

	\subsection{The similarity orbit and its derivatives}
	
	Define the matrix-valued trajectory $F: \mathbb{R} \to M_n(\mathbb{C})$ by the similarity (adjoint) action
	\begin{equation}\label{eq:similarity_orbit}
		F(\lambda) = e^{\lambda A}\, T\, e^{-\lambda A}.
	\end{equation}
	Since $e^{\lambda A}$ is invertible for all $\lambda \in \mathbb{R}$ with inverse $e^{-\lambda A}$, the matrix $F(\lambda)$ is similar to $T$ for every $\lambda$. Hence every similarity-invariant scalar is conserved along the orbit:
	\begin{equation}\label{eq:spectral_invariance}
		\sigma(F(\lambda)) = \sigma(T), \qquad r(F(\lambda)) = r(T), \qquad \Tr(F(\lambda)^k) = \Tr(T^k)
	\end{equation}
	for all $\lambda \in \mathbb{R}$ and all $k \geq 1$.
	
	\begin{lemma}\label{lem:derivatives}
		For each integer $k \geq 0$, the $k$-th derivative of $F$ with respect to $\lambda$ is
		\begin{equation}\label{eq:derivatives}
			F^{(k)}(\lambda) = e^{\lambda A}\, \Delta_A^k(T)\, e^{-\lambda A}.
		\end{equation}
	\end{lemma}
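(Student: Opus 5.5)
The plan is induction on $k$, with the base case $k=0$ holding by the definition \eqref{eq:similarity_orbit} of $F$ and the convention $\Delta_A^0(T)=T$. Before the inductive step, I will record two standard facts about the matrix exponential. The map $\lambda\mapsto e^{\lambda A}$ is differentiable on $\mathbb{R}$ (its power series converges absolutely and locally uniformly, so it may be differentiated termwise). Moreover,
\[
\frac{d}{d\lambda}e^{\lambda A}=Ae^{\lambda A}=e^{\lambda A}A,
\qquad
\frac{d}{d\lambda}e^{-\lambda A}=-Ae^{-\lambda A}=-e^{-\lambda A}A,
\]
where the second equality in each identity uses the fact that $A$ commutes with every power of itself.

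For the inductive step, I assume \eqref{eq:derivatives} holds for some $k\ge 0$ and set $S=\Delta_A^k(T)$, a constant matrix. Then $F^{(k)}(\lambda)=e^{\lambda A}Se^{-\lambda A}$ is a product of three differentiable matrix-valued functions. The product rule holds for matrix products, with the order of the factors preserved, because multiplication is bilinear. Applying it gives
\[
F^{(k+1)}(\lambda)=e^{\lambda A}A\,S\,e^{-\lambda A}-e^{\lambda A}S\,A\,e^{-\lambda A}
=e^{\lambda A}(AS-SA)e^{-\lambda A}.
\]
Since $AS-SA=\Delta_A(S)=\Delta_A^{k+1}(T)$, this is exactly \eqref{eq:derivatives} for $k+1$, which completes the induction.

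None of these steps is a genuine obstacle. The only point needing care is to place the factor $A$ on the correct side when differentiating each exponential: $A$ goes to the left of $S$ for $e^{\lambda A}$ and to the right of $S$ for $e^{-\lambda A}$, which is possible because $A$ commutes with $e^{\pm\lambda A}$. This placement is what turns the derivative into the commutator rather than some other combination. I would also remark that $F$ is therefore $C^\infty$, in fact real-analytic, which the later sections will need when they treat \eqref{eq:general_relation} as a differential equation for $F$.
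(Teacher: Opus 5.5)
Your proof is correct and follows the paper's argument: induction on $k$, with the inductive step obtained from the product rule and the derivatives of $e^{\pm\lambda A}$, using that $A$ commutes with $e^{\lambda A}$ to place $A\,\Delta_A^k(T)-\Delta_A^k(T)\,A$ between the two exponentials. The only cosmetic difference is the order of steps. You commute $A$ past the exponentials before applying the product rule, while the paper differentiates to $A e^{\lambda A}$ and $-e^{-\lambda A}A$ and commutes afterwards.
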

	\begin{proof}
		By induction on $k$. The case $k = 0$ is~\eqref{eq:similarity_orbit}. Assume the formula for some $k \geq 0$. Differentiating $F^{(k)}(\lambda) = e^{\lambda A}\, \Delta_A^k(T)\, e^{-\lambda A}$ and using $\frac{d}{d\lambda} e^{\lambda A} = A\, e^{\lambda A}$ and $\frac{d}{d\lambda} e^{-\lambda A} = -e^{-\lambda A}\, A$,
		\begin{align*}
			F^{(k+1)}(\lambda)&= A\, e^{\lambda A}\, \Delta_A^k(T)\, e^{-\lambda A} + e^{\lambda A}\, \Delta_A^k(T)\, e^{-\lambda A}\, (-A) \\
			&= e^{\lambda A}\bigl(A\, \Delta_A^k(T) - \Delta_A^k(T)\, A\bigr) e^{-\lambda A}
			= e^{\lambda A}\, \Delta_A^{k+1}(T)\, e^{-\lambda A},
		\end{align*}
		where we used that $A$ commutes with $e^{\lambda A}$.
	\end{proof}
	
	Under~\eqref{eq:general_relation}, substituting Lemma~\ref{lem:derivatives} shows that $F$ satisfies the second-order linear matrix ODE
	\begin{equation}\label{eq:matrix_ODE}
		F''(\lambda) + \alpha\, F'(\lambda) + \beta\, F(\lambda) = e^{\lambda A}\bigl(\Delta_A^2(T) + \alpha\,\Delta_A(T) + \beta\, T\bigr) e^{-\lambda A} = 0.
	\end{equation}
	
	\subsection{General solution and initial conditions}
	
	The form of the general solution of~\eqref{eq:matrix_ODE} depends on the roots $z_1, z_2$ of $P(z) = z^2 + \alpha z + \beta$, with discriminant $D = \alpha^2 - 4\beta$.
	
	\begin{enumerate}[\rm($i$)]
		\item \emph{Distinct roots} ($z_1 \neq z_2$): $F(\lambda) = e^{z_1 \lambda} C_1 + e^{z_2 \lambda} C_2$.
		\item \emph{Repeated root} ($z_1 = z_2 = z_0$): $F(\lambda) = e^{z_0 \lambda}(C_1 + \lambda\, C_2)$.
	\end{enumerate}
	The constants $C_1, C_2 \in M_n(\mathbb{C})$ are fixed by $F(0) = T$ and $F'(0) = \Delta_A(T)$.
	
	For distinct roots, solving $C_1 + C_2 = T$ and $z_1 C_1 + z_2 C_2 = \Delta_A(T)$ gives
	\begin{equation}\label{eq:C1C2_explicit}
		C_1 = \frac{\Delta_A(T) - z_2\, T}{z_1 - z_2}, \qquad C_2 = \frac{z_1\, T - \Delta_A(T)}{z_1 - z_2}.
	\end{equation}
	For the repeated root, $C_1 = T$ and $C_2 = \Delta_A(T) - z_0\, T$.
	
	\subsection{Analytic and algebraic tools}
	
	We use three standard properties of the spectral radius on $M_n(\mathbb{C})$.
	
	\begin{lemma}\label{lem:spectral_tools}
		Let $M, M_k \in M_n(\mathbb{C})$ and $\mu \in \mathbb{C}$. Then:
		\begin{enumerate}[\rm($i$)]
			\item \textup{(Continuity)} $r: M_n(\mathbb{C}) \to [0,\infty)$ is continuous; if $M_k \to M$ in any matrix norm, then $r(M_k) \to r(M)$.
			\item \textup{(Homogeneity)} $r(\mu M) = |\mu|\, r(M)$.
			\item \textup{(Norm bound)} $r(M) \leq \left\lVert M\right\rVert$ for any submultiplicative matrix norm $\left\lVert \cdot\right\rVert$.
		\end{enumerate}
	\end{lemma}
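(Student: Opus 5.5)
The three items are standard, and I would prove them in the order (iii), (ii), (i), since only continuity needs real work.

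For the norm bound (iii), fix $\mu\in\sigma(M)$ with $|\mu|=r(M)$ and an eigenvector $x\neq 0$ with $Mx=\mu x$. Form the rank-one matrix $X = x y^{*}$ for any nonzero $y$; then $X\neq 0$ and $MX=\mu X$. Submultiplicativity gives
\[
|\mu|\,\lVert X\rVert=\lVert MX\rVert\le \lVert M\rVert\,\lVert X\rVert,
\]
and dividing by $\lVert X\rVert$ yields $r(M)\le\lVert M\rVert$. Using the matrix $X$ rather than the vector $x$ is what makes this work for an arbitrary submultiplicative norm, which need not be an operator norm.

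Homogeneity (ii) follows at once from $\sigma(\mu M)=\mu\,\sigma(M)$. For $\mu\neq0$ this is the identity $\det(\mu M-\lambda I)=\mu^n\det(M-\mu^{-1}\lambda I)$. For $\mu=0$ both sides of (ii) equal $0$.

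For continuity (i), I would use the fact that eigenvalues are the roots of the characteristic polynomial $\det(\lambda I-M)$. Its coefficients are polynomial in the entries of $M$, hence continuous. I would then invoke continuity of the roots of a monic polynomial as functions of its coefficients, in the following elementary form. Suppose $M_k\to M$ and $\lambda_k\in\sigma(M_k)$. The sequence $(\lambda_k)$ is bounded by $\sup_k\lVert M_k\rVert$, using (iii). If $p_M$ denotes the characteristic polynomial, then any limit point $\lambda$ of $(\lambda_k)$ satisfies $p_M(\lambda)=\lim p_{M_k}(\lambda_k)=0$. This already gives $\limsup r(M_k)\le r(M)$.

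The step I expect to be the main obstacle is the reverse inequality $\liminf r(M_k)\ge r(M)$: every eigenvalue of $M$ must be approximated by eigenvalues of $M_k$. I would handle it by writing $p_{M_k}(\lambda)=\prod_{i}(\lambda-\lambda_i^{(k)})$ and evaluating at an eigenvalue $\mu$ of $M$ with $|\mu|=r(M)$. This gives
\[
\prod_i|\mu-\lambda_i^{(k)}| = |p_{M_k}(\mu)|\to |p_M(\mu)| = 0,
\]
so $\min_i|\mu-\lambda_i^{(k)}|\le |p_{M_k}(\mu)|^{1/n}\to 0$. Hence some eigenvalue of $M_k$ tends to $\mu$, and $r(M_k)\to r(M)$. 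Independence of the chosen norm follows from the equivalence of norms on $M_n(\mathbb{C})$, already noted in the notation section.
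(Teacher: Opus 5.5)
Your proof is correct and complete. The paper gives no proof of this lemma; it quotes the three items as standard facts. So your argument fills in what the paper takes for granted rather than competing with a written proof.

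The rank-one device $X=xy^{*}$ in (iii) is the standard way to cover submultiplicative norms that are not induced by a vector norm. Your treatment of (i) is an elementary substitute for the usual appeal to continuous dependence of roots on coefficients (Hurwitz/Rouch\'e or an Ostrowski-type bound).

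The split you make in (i) is worth noticing. The $\limsup$ half uses only boundedness of the spectra and closedness of the spectrum under limits. The $\liminf$ half rests on $\min_i\lvert\mu-\lambda_i^{(k)}\rvert\le\lvert p_{M_k}(\mu)\rvert^{1/n}$, which genuinely uses that $p_{M_k}$ has degree $n$.

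That $\liminf$ half, the one you flagged as the obstacle, is exactly what the paper needs in Theorem~\ref{thm:KS}. Concluding $r(\Delta_A(T))=0$ from $r(F(\lambda)/\lambda)\to 0$ requires $r(\Delta_A(T))\le\liminf_{\lambda\to\infty} r(F(\lambda)/\lambda)$, i.e.\ lower semicontinuity. This is precisely what is lost in a general Banach algebra, as the paper's concluding remarks point out.

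The only thing to tighten is the $\limsup$ step. Say explicitly that you pick $\lambda_k\in\sigma(M_k)$ with $\lvert\lambda_k\rvert=r(M_k)$. Then pass to a subsequence realizing $\limsup_k r(M_k)$ before extracting a convergent sub-subsequence.
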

	
In $M_n(\mathbb{C})$, $r(M) = 0$ if and only if $M$ is nilpotent; thus $r(M)=0$ is equivalent to $M^n = 0$. To pass from traces to nilpotency we use the following classical consequence of Newton's identities, which we state separately since it is needed in the hyperbolic case  (where norm estimates are unavailable). We include a proof for the reader's convenience.
	
	\begin{lemma}\label{lem:newton}
		Let $X \in M_n(\mathbb{C})$. If $\Tr(X^k) = 0$ for all $1 \leq k \leq n$, then $X$ is nilpotent.
	\end{lemma}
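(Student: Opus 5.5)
Let $\mu_1,\dots,\mu_n$ be the eigenvalues of $X$, listed with algebraic multiplicity. Writing $X$ in upper triangular (Schur or Jordan) form shows that $X^k$ is triangular with diagonal entries $\mu_i^k$. Hence the hypothesis reads
\[
p_k := \sum_{i=1}^{n} \mu_i^k = \Tr(X^k) = 0, \qquad 1 \le k \le n .
\]
I will show that this forces every elementary symmetric polynomial $e_1,\dots,e_n$ of the $\mu_i$ to vanish. Then the characteristic polynomial $\prod_{i}(z-\mu_i)=\sum_{k=0}^{n}(-1)^k e_k z^{n-k}$ reduces to $z^n$, so $\sigma(X)=\{0\}$. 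By the equivalences recorded in the Notation paragraph (or directly by Cayley--Hamilton), $X^n=0$.

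The key tool is Newton's identities: for $1\le k\le n$,
\[
k\,e_k = \sum_{i=1}^{k} (-1)^{i-1} e_{k-i}\, p_i, \qquad e_0 = 1 .
\]
These follow from the logarithmic derivative of $E(t)=\prod_i(1+\mu_i t)=\sum_k e_k t^k$, namely $E'(t)/E(t)=\sum_i \mu_i/(1+\mu_i t)=\sum_{k\ge1}(-1)^{k-1}p_k t^{k-1}$. Multiplying through by $E(t)$ and comparing coefficients of $t^{k-1}$ gives the identities.

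The proof then proceeds by induction on $k$. Every term on the right-hand side contains a factor $p_i$ with $1\le i\le k\le n$, and each such $p_i$ is zero. Therefore $k\,e_k=0$ for every $1\le k\le n$. Since we are working over $\mathbb{C}$, which has characteristic zero, we may divide by $k$ and conclude $e_k=0$. In fact the induction hypothesis on the earlier $e_j$ is not even needed, because each term already vanishes through its $p_i$ factor.

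The only step needing any care is a correct statement of Newton's identities, including the sign convention. Everything else is immediate, and the division by $k$ is harmless over $\mathbb{C}$.
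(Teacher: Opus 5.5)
Your proof is correct and follows the paper's argument essentially line by line: the power sums equal $\Tr(X^k)$, Newton's identities then force $e_1=\dots=e_n=0$, so the characteristic polynomial is $z^n$, and Cayley--Hamilton gives $X^n=0$. Your extra justifications are also correct, namely triangularization for $\Tr(X^k)=\sum_i\mu_i^k$ and the logarithmic-derivative derivation of Newton's identities; they fill in steps the paper takes as standard.
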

	\begin{proof}
		Let $\mu_1,\dots,\mu_n$ be the eigenvalues of $X$ with multiplicity, let
		\[
		p_k = \sum_{i=1}^{n} \mu_i^k = \Tr(X^k), \qquad e_k = \sum_{1 \leq i_1 < \dots < i_k \leq n} \mu_{i_1} \cdots \mu_{i_k},
		\]
		be their power sums and elementary symmetric functions, with $e_0 = 1$. Newton's identities relate them by
		\[
		k\, e_k = \sum_{i=1}^{k} (-1)^{i-1}\, e_{k-i}\, p_i, \qquad 1 \leq k \leq n .
		\]
		By hypothesis $p_1 = \dots = p_n = 0$, so every term on the right-hand side contains a vanishing factor and $e_k = 0$ for all $1 \leq k \leq n$. Hence the characteristic polynomial of $X$ is $$\det(zI - X) = \sum_{k=0}^{n} (-1)^k e_k z^{n-k} = z^n,$$ and $X^n = 0$ by the Cayley--Hamilton theorem.
	\end{proof}
	
	To separate the traces from the exponential factors we use the following elementary independence property of exponentials (see, e.g., \cite{hsd}).
	
	\begin{lemma}\label{lem:exp_independence}
		Let $\mu_0, \dots, \mu_m$ be pairwise distinct complex numbers and let $c_0,\dots,c_m \in \mathbb{C}$. If $\sum_{j=0}^{m} c_j\, e^{\mu_j \lambda} = 0$ for all $\lambda \in \mathbb{R}$, then $c_j = 0$ for each $j$.
	\end{lemma}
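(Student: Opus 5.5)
I will argue by induction on $m$, the number of distinct exponents, using differentiation to lower $m$ by one at each step. The approach is the standard one for linear independence of exponentials.

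For $m=0$ the hypothesis reads $c_0 e^{\mu_0\lambda}=0$ for all $\lambda$. Since $e^{\mu_0\lambda}\neq 0$, this gives $c_0=0$.

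For the inductive step, suppose the claim holds for any $m$ pairwise distinct exponents, and let $f(\lambda)=\sum_{j=0}^{m} c_j e^{\mu_j\lambda}$ vanish identically on $\mathbb{R}$.
\begin{enumerate}[\rm(1)]
\item Multiply by $e^{-\mu_m\lambda}$ to obtain $g(\lambda)=\sum_{j=0}^{m-1} c_j e^{(\mu_j-\mu_m)\lambda}+c_m\equiv 0$.
\item Differentiate $g$ with respect to $\lambda$, which is legitimate since $g$ is smooth and identically zero. This kills the constant term and yields
\[
\sum_{j=0}^{m-1} c_j(\mu_j-\mu_m)\,e^{(\mu_j-\mu_m)\lambda}=0 \qquad\text{for all } \lambda\in\mathbb{R}.
\]
\item The exponents $\mu_j-\mu_m$, $0\le j\le m-1$, are still pairwise distinct, so the induction hypothesis gives $c_j(\mu_j-\mu_m)=0$ for each such $j$.
\item Because $\mu_j\neq\mu_m$ for $j<m$, it follows that $c_j=0$ for $0\le j\le m-1$.
\item Substituting back into $g$ leaves $c_m=0$.
\end{enumerate}

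There is no real obstacle here. The only point requiring care is that the shifted exponents remain pairwise distinct, so the induction hypothesis applies, and that each factor $\mu_j-\mu_m$ is nonzero.

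An alternative is to differentiate $f$ $m$ times at $\lambda=0$. This gives the linear system $\sum_j c_j\mu_j^k=0$ for $0\le k\le m$, whose coefficient matrix is an invertible Vandermonde matrix, so all $c_j$ vanish. I would present the inductive argument because it avoids any determinant computation.
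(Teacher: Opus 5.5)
Your argument is correct. Multiplying by $e^{-\mu_m\lambda}$, differentiating to remove the constant term $c_m$, and applying the induction hypothesis to the $m$ pairwise distinct exponents $\mu_j-\mu_m$ gives a complete proof. The Vandermonde alternative you sketch is equally valid.

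The paper does not prove this lemma. It quotes it as an elementary standard fact with a textbook reference, so your write-up supplies the omitted argument rather than competing with one. One cosmetic point: your induction is really on the number $m+1$ of exponents. The hypothesis covers $m$ exponents and the step proves the case of $m+1$. Your base case and step already handle this consistently, so only the phrase ``$m$, the number of distinct exponents'' needs adjusting.
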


	\bigskip
	\section{Roots on the Imaginary Axis}\label{sec:imaginary}
	\bigskip
	
	Roots on the imaginary axis occur in two regimes: the double zero root and pure imaginary pairs.
	
	\subsection{Double zero root ($z_1 = z_2 = 0$): the Kleinecke--Shirokov case}
	
	This is $\alpha = \beta = 0$, i.e.\ $\Delta_A^2(T) = 0$. The ODE reduces to $F''(\lambda) = 0$, with a linear trajectory
	\begin{equation}\label{eq:KS_trajectory}
		F(\lambda) = T + \lambda\, \Delta_A(T).
	\end{equation}
	
	The following short proof follows Shirokov's original idea.
	
	\begin{theorem}\label{thm:KS}
		Let $A, T \in M_n(\mathbb{C})$. If $\Delta_A^2(T) = 0$, then $\Delta_A(T)$ is nilpotent.
	\end{theorem}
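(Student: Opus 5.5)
We plan to use the linear trajectory \eqref{eq:KS_trajectory} together with the spectral invariance \eqref{eq:spectral_invariance} and the tools of Lemma~\ref{lem:spectral_tools}. By Lemma~\ref{lem:derivatives} and the hypothesis $\Delta_A^2(T)=0$, the orbit is $F(\lambda)=e^{\lambda A}Te^{-\lambda A}=T+\lambda\,\Delta_A(T)$ for all $\lambda\in\mathbb{R}$. Since $F(\lambda)$ is similar to $T$, we have $r(T+\lambda\Delta_A(T))=r(T)$ for every $\lambda$. The idea is to rescale so that the commutator dominates: for $\lambda\neq 0$,
\[
r\Bigl(\Delta_A(T)+\tfrac{1}{\lambda}\,T\Bigr)=\frac{1}{|\lambda|}\,r\bigl(T+\lambda\,\Delta_A(T)\bigr)=\frac{r(T)}{|\lambda|},
\]
using homogeneity, Lemma~\ref{lem:spectral_tools}(ii).

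Next we let $\lambda\to\infty$. The left-hand argument converges in norm to $\Delta_A(T)$, so continuity of the spectral radius, Lemma~\ref{lem:spectral_tools}(i), gives $r\bigl(\Delta_A(T)+\lambda^{-1}T\bigr)\to r(\Delta_A(T))$. The right-hand side tends to $0$ because $r(T)$ is a fixed finite number. Hence $r(\Delta_A(T))=0$, and in $M_n(\mathbb{C})$ this is equivalent to $\Delta_A(T)$ being nilpotent, i.e.\ $\Delta_A(T)^n=0$.

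The only step needing care is the continuity of $r$ on $M_n(\mathbb{C})$. We take it as given from Lemma~\ref{lem:spectral_tools}; it follows from continuous dependence of the roots of the characteristic polynomial on its coefficients.

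As a purely algebraic alternative, one can avoid continuity by comparing traces. The identity $\Tr\bigl((T+\lambda\Delta_A(T))^k\bigr)=\Tr(T^k)$ holds for all $\lambda$, and its left side is a polynomial in $\lambda$ of degree at most $k$ whose $\lambda^k$ coefficient is $\Tr(\Delta_A(T)^k)$. That coefficient must therefore vanish for $1\le k\le n$, and Lemma~\ref{lem:newton} then yields nilpotency.
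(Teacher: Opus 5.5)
Your proof is correct and is essentially the paper's own argument: rescale the linear orbit $T+\lambda\,\Delta_A(T)$ by $1/\lambda$, use similarity invariance and homogeneity of $r$, and pass to the limit $\lambda\to\infty$ using continuity of $r$. Your alternative also works and is purely algebraic, needing no continuity of $r$: the $\lambda^k$-coefficient $\Tr(\Delta_A(T)^k)$ of the constant polynomial $\Tr\bigl((T+\lambda\Delta_A(T))^k\bigr)$ must vanish, and then Lemma~\ref{lem:newton} gives nilpotency.
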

	\begin{proof}
		From~\eqref{eq:KS_trajectory}, for $\lambda \neq 0$,
		\[
		\frac{F(\lambda)}{\lambda} = \frac{T}{\lambda} + \Delta_A(T) \xrightarrow[\lambda \to \infty]{} \Delta_A(T)
		\]
		in norm. By continuity and homogeneity of $r$ (Lemma~\ref{lem:spectral_tools}) and spectral invariance~\eqref{eq:spectral_invariance},
		\[
		r(\Delta_A(T)) = \lim_{\lambda \to \infty} r\!\left(\frac{F(\lambda)}{\lambda}\right) = \lim_{\lambda \to \infty} \frac{r(F(\lambda))}{|\lambda|} = \lim_{\lambda \to \infty} \frac{r(T)}{|\lambda|} = 0.
		\]
		Hence $\Delta_A(T)$ is nilpotent.
	\end{proof}
	
	\begin{remark}\label{rem:KS_geometry}
		This is the prototype of \emph{spectral squeezing}: the orbit traces an affine line along which the spectrum is constant while the norm grows linearly. A constant spectral radius together with unbounded norm forces that radius to be zero.
	\end{remark}
	
	\subsection{Conjugate imaginary roots ($z_{1,2} = \pm i\omega$, $\omega \neq 0$)}
	
	Here $\alpha = 0$, $\beta = \omega^2 > 0$, so $\Delta_A^2(T) + \omega^2 T = 0$. The system $F'' + \omega^2 F = 0$ has bounded oscillatory solutions
	\begin{equation}\label{eq:oscillatory}
		F(\lambda) = \cos(\omega\lambda)\, T + \frac{\sin(\omega\lambda)}{\omega}\, \Delta_A(T).
	\end{equation}
	Since $F(\lambda)$ is bounded and periodic of period $2\pi/\omega$, the constraint $r(F(\lambda)) = r(T)$ gives no asymptotic information, and the relation imposes no nilpotency on $T$ or $\Delta_A(T)$.
	
	\begin{example}\label{ex:oscillatory}
		Let $A = \begin{psmallmatrix} 0 & 1 \\ -1 & 0 \end{psmallmatrix}$ and $T = \begin{psmallmatrix} 1 & 0 \\ 0 & -1 \end{psmallmatrix}$. Then
		\[
		\Delta_A(T) = \begin{pmatrix} 0 & -2 \\ -2 & 0 \end{pmatrix}, \qquad
		\Delta_A^2(T) = \begin{pmatrix} -4 & 0 \\ 0 & 4 \end{pmatrix} = -4\,T,
		\]
		so $\omega = 2$. Here $\Delta_A(T)$ has eigenvalues $\pm 2$ and $T$ has eigenvalues $\pm 1$; neither is nilpotent.
	\end{example}
	
	\bigskip
	\section{Roots with Real Parts of the Same Sign}\label{sec:nonzero_real}
	\bigskip
	
	When $z_1, z_2$ have non-zero real parts of the same sign, $F(\lambda)$ decays to zero in one temporal direction and the spectral collapse is total. We treat the subcases by multiplicity and reality. The following observation is used repeatedly.
	
	\begin{remark}\label{rem:shift_trick}
		If $T$ satisfies~\eqref{eq:general_relation}, then so does $S := \Delta_A(T)$, since
		\[
		\Delta_A^2(S) + \alpha\,\Delta_A(S) + \beta\, S
		= \Delta_A\!\bigl(\Delta_A^2(T) + \alpha\,\Delta_A(T) + \beta\, T\bigr) = \Delta_A(0) = 0 .
		\]
		Consequently, any statement asserting that every solution of~\eqref{eq:general_relation} with a given pair of roots is nilpotent applies to $\Delta_A(T)$ as well, since one may run the argument a second time with $S$ in place of $T$. This is the case in every regime treated below, so the nilpotency of $\Delta_A(T)$ requires no separate proof and we do not repeat it.
	\end{remark}
	
	\subsection{Distinct real roots of the same sign ($z_1 z_2 > 0$)}\label{subsec:same_sign}
	
	Assume $z_1 > z_2$, both positive or both negative. Then  $$F(\lambda) = e^{z_1\lambda} C_1 + e^{z_2\lambda} C_2.$$
	
	\begin{theorem}\label{thm:same_sign}
		If $z_1, z_2 \in \mathbb{R}$ satisfy $z_1 z_2 > 0$ and $z_1 \neq z_2$, then $T$ and $\Delta_A(T)$ are nilpotent.
	\end{theorem}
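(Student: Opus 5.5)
The plan is to use spectral squeezing in the temporal direction along which both exponentials decay. Write $F(\lambda) = e^{z_1\lambda} C_1 + e^{z_2\lambda} C_2$ with $C_1, C_2$ as in \eqref{eq:C1C2_explicit}.

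Suppose first that both roots are negative. For $\lambda \to +\infty$ we have $e^{z_1\lambda}, e^{z_2\lambda} \to 0$. Hence
\[
\left\lVert F(\lambda)\right\rVert \le e^{z_1\lambda}\left\lVert C_1\right\rVert + e^{z_2\lambda}\left\lVert C_2\right\rVert \longrightarrow 0 .
\]
By the norm bound in Lemma~\ref{lem:spectral_tools} and spectral invariance \eqref{eq:spectral_invariance}, this gives
\[
r(T) = r(F(\lambda)) \le \left\lVert F(\lambda)\right\rVert \quad \text{for every } \lambda .
\]
Letting $\lambda \to +\infty$ yields $r(T)=0$, so $T$ is nilpotent.

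If both roots are positive, the same argument works with $\lambda \to -\infty$. Alternatively, one may replace $A$ by $-A$. This sends $\Delta_A$ to $-\Delta_A$, hence $\alpha$ to $-\alpha$, and the roots to $-z_1, -z_2$. The identities $e^{z_j\lambda} \to 0$ as $\lambda\to-\infty$ for $z_j>0$ make the direct argument immediate, so no reduction is really needed.

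Nilpotency of $\Delta_A(T)$ then follows from Remark~\ref{rem:shift_trick}. The matrix $S = \Delta_A(T)$ satisfies the same relation with the same roots, so the argument above applies verbatim to $S$.

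There is essentially no obstacle here. The only point to check is that distinctness of the roots is used only to write $F$ in the two-exponential form. The hypothesis $z_1 z_2>0$ guarantees that a single direction of $\lambda$ makes both terms decay, so no cancellation analysis between $C_1$ and $C_2$ is needed.
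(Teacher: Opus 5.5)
Your proposal is correct and follows the paper's proof: you bound $r(T)=r(F(\lambda))\le\lVert F(\lambda)\rVert$ and let $\lambda$ run in the direction where both exponentials decay ($+\infty$ for negative roots, $-\infty$ for positive ones), then get $\Delta_A(T)$ from Remark~\ref{rem:shift_trick}. Your aside about replacing $A$ by $-A$ is also correct (it sends $\alpha\mapsto-\alpha$ and the roots to $-z_1,-z_2$), but, as you note yourself, it is not needed.
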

	\begin{proof}
		If $z_1 > z_2 > 0$, then as $\lambda \to -\infty$ both exponentials tend to $0$, so
		\[
		\left\lVert F(\lambda)\right\rVert \leq e^{z_1\lambda}\, \left\lVert C_1\right\rVert + e^{z_2\lambda}\, \left\lVert C_2\right\rVert \to 0 .
		\]
		With Lemma~\ref{lem:spectral_tools}($iii$) and spectral invariance~\eqref{eq:spectral_invariance}, $$0 \le r(T) = r(F(\lambda)) \le \left\lVert F(\lambda)\right\rVert \to 0,$$ so $r(T) = 0$ and $T$ is nilpotent. If $0 > z_1 > z_2$, use $\lambda \to +\infty$. The argument applies to \emph{every} solution of~\eqref{eq:general_relation} with these roots; by Remark~\ref{rem:shift_trick}, $\Delta_A(T)$ is such a solution and is therefore nilpotent as well.
	\end{proof}
	
	\subsection{Repeated non-zero real root ($z_1 = z_2 = z_0 \neq 0$)}\label{subsec:repeated}
	
	Here $D = 0$, $\beta = \alpha^2/4 > 0$, and
	\begin{equation}\label{eq:repeated_solution}
		F(\lambda) = e^{z_0 \lambda}\bigl(T + \lambda\, C_2\bigr), \qquad C_2 = \Delta_A(T) - z_0\, T.
	\end{equation}
	
	\begin{theorem}\label{thm:repeated}
		If $z_0 \neq 0$, then $T$ and $\Delta_A(T)$ are nilpotent.
	\end{theorem}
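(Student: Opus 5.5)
We follow the same spectral-squeezing scheme as in Theorem~\ref{thm:same_sign}, now applied to the repeated-root trajectory~\eqref{eq:repeated_solution}. Since $z_0$ is a double root of $z^2+\alpha z+\beta$ with $\alpha,\beta\in\mathbb{R}$, it is real. We treat the case $z_0<0$ in detail. The case $z_0>0$ is symmetric: one lets $\lambda\to-\infty$ instead of $\lambda\to+\infty$.

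For $z_0<0$ and $\lambda\ge 0$ we estimate
\[
\left\lVert F(\lambda)\right\rVert \le e^{z_0\lambda}\bigl(\left\lVert T\right\rVert + \lambda\left\lVert C_2\right\rVert\bigr).
\]
The right-hand side tends to $0$ as $\lambda\to+\infty$, because exponential decay dominates the linear factor $\lambda$; that is, $\lambda e^{z_0\lambda}\to 0$. This polynomial-versus-exponential comparison is the only point that differs from the distinct-root case, and it is elementary.

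We then combine the norm bound of Lemma~\ref{lem:spectral_tools}($iii$) with the spectral invariance~\eqref{eq:spectral_invariance}:
\[
0\le r(T)=r(F(\lambda))\le \left\lVert F(\lambda)\right\rVert\to 0 .
\]
Hence $r(T)=0$, and $T$ is nilpotent. For $z_0>0$, the same estimate with $|\lambda|$ in place of $\lambda$ holds as $\lambda\to-\infty$, since $|\lambda|e^{z_0\lambda}\to 0$ there.

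The argument uses only the relation~\eqref{eq:general_relation} with this pair of roots, so it applies to every solution. By Remark~\ref{rem:shift_trick}, $\Delta_A(T)$ is also such a solution, and it is therefore nilpotent as well. There is no real obstacle here. The only care needed is to choose the temporal direction according to the sign of $z_0$ and to note that the linear growth of $T+\lambda C_2$ cannot defeat the exponential decay.
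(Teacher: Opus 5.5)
Your proof is correct and is essentially the paper's argument: the orbit $F(\lambda)=e^{z_0\lambda}(T+\lambda C_2)$ is squeezed to zero in norm in the direction where $e^{z_0\lambda}$ decays, which forces $r(T)=0$, and Remark~\ref{rem:shift_trick} then carries the conclusion over to $\Delta_A(T)$. The only difference is cosmetic: the paper writes out the case $z_0>0$ with $\lambda\to-\infty$ and calls $z_0<0$ symmetric, whereas you do it the other way round.
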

	\begin{proof}
		Assume $z_0 > 0$ (the case $z_0 < 0$ is symmetric, via $\lambda \to +\infty$). Since $e^{z_0\lambda}|\lambda| \to 0$ as $\lambda \to -\infty$,
		\[
		\left\lVert F(\lambda)\right\rVert \leq e^{z_0\lambda}\bigl(\left\lVert T\right\rVert + |\lambda|\,\left\lVert C_2\right\rVert\bigr) \to 0 ,
		\]
		so $r(T) = 0$ and $T$ is nilpotent. This holds for every solution of~\eqref{eq:general_relation} with the repeated root $z_0$; by Remark~\ref{rem:shift_trick}, $\Delta_A(T)$ satisfies the same relation and is therefore nilpotent as well.
	\end{proof}
	
	\subsection{Complex conjugate roots with non-zero real part}\label{subsec:spiral}
	
	Let $z_{1,2} = \gamma \pm i\omega$ with $\gamma \neq 0$ and $\omega \neq 0$. Writing the general solution in real form, the orbit is the spiral
	\begin{equation}\label{eq:spiral}
		F(\lambda) = e^{\gamma\lambda}\bigl(\cos(\omega\lambda)\, K_1 + \sin(\omega\lambda)\, K_2\bigr),
	\end{equation}
	with $K_1 = T$ and $K_2 = \tfrac{1}{\omega}(\Delta_A(T) - \gamma T)$.
	
	\begin{theorem}\label{thm:spiral}
		If $\gamma \neq 0$, then $T$ and $\Delta_A(T)$ are nilpotent.
	\end{theorem}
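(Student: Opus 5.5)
The argument follows the spectral squeezing pattern of Theorems~\ref{thm:same_sign} and~\ref{thm:repeated}, applied to the spiral \eqref{eq:spiral}. First I check that \eqref{eq:spiral} with $K_1 = T$ and $K_2 = \tfrac{1}{\omega}(\Delta_A(T) - \gamma T)$ is the solution of \eqref{eq:matrix_ODE} with the correct initial data. The functions $e^{\gamma\lambda}\cos(\omega\lambda)$ and $e^{\gamma\lambda}\sin(\omega\lambda)$ solve the scalar equation $y''+\alpha y'+\beta y=0$ when $z_{1,2}=\gamma\pm i\omega$. Evaluating at $\lambda=0$ gives $F(0)=K_1=T$, and $F'(0)=\gamma K_1+\omega K_2=\Delta_A(T)$. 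By uniqueness for linear ODEs, this formula coincides with $e^{\lambda A}Te^{-\lambda A}$. Equivalently, one may start from the complex form $F(\lambda)=e^{z_1\lambda}C_1+e^{z_2\lambda}C_2$ with $C_1, C_2$ given by \eqref{eq:C1C2_explicit}; this suffices for the estimate.

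Next, assume $\gamma>0$ and let $\lambda\to-\infty$. Since $|\cos|,|\sin|\le 1$,
\[
\left\lVert F(\lambda)\right\rVert \le e^{\gamma\lambda}\bigl(\left\lVert K_1\right\rVert+\left\lVert K_2\right\rVert\bigr)\to 0 .
\]
Spectral invariance \eqref{eq:spectral_invariance} and the norm bound in Lemma~\ref{lem:spectral_tools}($iii$) then give $0\le r(T)=r(F(\lambda))\le\left\lVert F(\lambda)\right\rVert\to0$. Hence $r(T)=0$, and $T$ is nilpotent. For $\gamma<0$ the same argument works with $\lambda\to+\infty$.

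Finally, the argument uses only that $T$ solves \eqref{eq:general_relation} with roots $\gamma\pm i\omega$. By Remark~\ref{rem:shift_trick}, $\Delta_A(T)$ satisfies the same relation, so $\Delta_A(T)$ is nilpotent as well.

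I expect no real obstacle. The only point needing care is the real-form representation, where the constants must be identified correctly; the complex-exponential form with $|e^{z_j\lambda}|=e^{\gamma\lambda}$ avoids that issue and gives the same bound.
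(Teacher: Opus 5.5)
Your proposal is correct and follows essentially the same route as the paper: bound $\left\lVert F(\lambda)\right\rVert \le e^{\gamma\lambda}\bigl(\left\lVert K_1\right\rVert+\left\lVert K_2\right\rVert\bigr)$, let $\lambda\to-\infty$ (or $+\infty$ when $\gamma<0$), combine spectral invariance with $r\le\left\lVert\cdot\right\rVert$, and obtain $\Delta_A(T)$ from Remark~\ref{rem:shift_trick}. Your check of the initial data for the real-form solution, $F(0)=T$ and $F'(0)=\gamma K_1+\omega K_2=\Delta_A(T)$, is a correct verification that the paper leaves implicit.
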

	\begin{proof}
		Assume that $\gamma > 0$. The case $\gamma < 0$ is symmetric. Since $|\cos(\omega\lambda)| \leq 1$ and $|\sin(\omega\lambda)| \leq 1$, the factor $\cos(\omega\lambda)K_1 + \sin(\omega\lambda)K_2$ is bounded in norm by $\left\lVert K_1\right\rVert + \left\lVert K_2\right\rVert$, so
		\[
		\left\lVert F(\lambda)\right\rVert \leq e^{\gamma\lambda}\bigl(\left\lVert K_1\right\rVert + \left\lVert K_2\right\rVert\bigr) \longrightarrow 0
		\qquad \text{as } \lambda \to -\infty ,
		\]
		and therefore $r(T) = 0$, which means that $T$ is nilpotent. The argument applies to every solution of~\eqref{eq:general_relation} with roots $\gamma \pm i\omega$, so $\Delta_A(T)$ is nilpotent as well by Remark~\ref{rem:shift_trick}.
	\end{proof}

\begin{remark}
	In the present case the conclusion can also be reached without any norm estimate. At the points $\lambda_m = 2m\pi/\omega$, $m \in \mathbb{Z}$, the trigonometric factor returns to $K_1 = T$, so that
	\[
	F(\lambda_m) = e^{\gamma\lambda_m}\, T .
	\]
	Since $F(\lambda_m)$ is similar to $T$, the spectral radius is unchanged. Homogeneity then gives $r(T) = e^{\gamma\lambda_m}\, r(T)$ for every $m$. For $m \neq 0$ the factor $e^{\gamma\lambda_m}$ differs from $1$, from where it follows that $r(T) = 0$.
\end{remark}
	
	\bigskip
	\section{The Mixed Case: One Zero Root}\label{sec:mixed}
	\bigskip
	
	This is $\beta = 0$, $\alpha \neq 0$, with roots $z_1 = 0$, $z_2 = -\alpha$. The relation becomes $\Delta_A^2(T) + \alpha\, \Delta_A(T) = 0$, and $F'' + \alpha F' = 0$ integrates to
	\begin{equation}\label{eq:mixed_trajectory}
		F(\lambda) = \left(T + \frac{1}{\alpha}\,\Delta_A(T)\right) - \frac{e^{-\alpha\lambda}}{\alpha}\, \Delta_A(T).
	\end{equation}
	
	\begin{proposition}\label{prop:mixed}
		If $z_1 = 0$ and $z_2 \neq 0$, then $\Delta_A(T)$ is nilpotent.
	\end{proposition}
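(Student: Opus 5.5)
The plan is to apply spectral squeezing to the exponential component of the trajectory~\eqref{eq:mixed_trajectory}. The relevant matrix is not $T$ itself but $S := \Delta_A(T)$. By Remark~\ref{rem:shift_trick}, $S$ satisfies the same relation $\Delta_A^2(S) + \alpha\,\Delta_A(S) = 0$, and the relation for $T$ says $\Delta_A(S) = -\alpha S$. So $S$ is an eigenvector of the derivation $\Delta_A$ with eigenvalue $-\alpha = z_2 \neq 0$. I would therefore study the orbit of $S$ directly, $G(\lambda) := e^{\lambda A} S e^{-\lambda A}$. By Lemma~\ref{lem:derivatives}, $G'(\lambda) = e^{\lambda A}\Delta_A(S)e^{-\lambda A} = -\alpha G(\lambda)$, so
\[
G(\lambda) = e^{-\alpha\lambda} S .
\]
The same identity can be read off from~\eqref{eq:mixed_trajectory} by differentiating: $F'(\lambda) = e^{-\alpha\lambda}\Delta_A(T)$, and $F'(\lambda) = e^{\lambda A}\Delta_A(T)e^{-\lambda A}$ by Lemma~\ref{lem:derivatives}.

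The key steps, in order, are as follows.

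\textbf{Step 1.} Derive $e^{\lambda A}\Delta_A(T)e^{-\lambda A} = e^{-\alpha\lambda}\Delta_A(T)$ for all $\lambda\in\mathbb{R}$, as above.

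\textbf{Step 2.} Apply spectral invariance~\eqref{eq:spectral_invariance} to $S$ and homogeneity (Lemma~\ref{lem:spectral_tools}($ii$)). This gives
\[
r(S) = r\bigl(e^{-\alpha\lambda} S\bigr) = e^{-\alpha\lambda}\, r(S)
\]
for every $\lambda$.

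\textbf{Step 3.} Choose any $\lambda$ with $e^{-\alpha\lambda}\neq 1$; this is possible because $\alpha \neq 0$, for instance $\lambda = 1$. Then $r(S) = 0$, so $\Delta_A(T)$ is nilpotent. Equivalently, one may let $\lambda\to+\infty$ or $\lambda\to-\infty$, according to the sign of $\alpha$, so that $\lVert G(\lambda)\rVert\to 0$ while $r(G(\lambda))$ stays equal to $r(S)$. This is the squeezing argument of Theorem~\ref{thm:same_sign}.

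There is no serious obstacle. The only point needing care is to squeeze the orbit of $\Delta_A(T)$ rather than that of $T$: the orbit of $T$ tends to the nonzero constant $T + \alpha^{-1}\Delta_A(T)$ in one direction and blows up in the other, so it yields no information about $r(T)$. This is consistent with the announced fact that $T$ need not be nilpotent, and may even be invertible, in this regime.
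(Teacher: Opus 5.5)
Your proof is correct and is essentially the paper's argument. Your orbit $G(\lambda)=e^{\lambda A}\Delta_A(T)e^{-\lambda A}$ is exactly $F'(\lambda)$, and the paper likewise obtains $F'(\lambda)=e^{-\alpha\lambda}\Delta_A(T)$; it does so by differentiating the explicit trajectory rather than by solving $G'=-\alpha G$, then combines similarity invariance with homogeneity to get $r(\Delta_A(T))=e^{-\alpha\lambda}\,r(\Delta_A(T))$ and takes any $\lambda\neq 0$.
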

	\begin{proof}
			Since $F'(\lambda)$ is similar to $\Delta_A(T)$, we have that $$r(F'(\lambda)) = r(\Delta_A(T))$$ for every $\lambda$. On the other hand, differentiating~\eqref{eq:mixed_trajectory} gives $$F'(\lambda) = e^{-\alpha\lambda}\,\Delta_A(T),$$ and homogeneity of the spectral radius yields $$r(F'(\lambda)) = e^{-\alpha\lambda}\, r(\Delta_A(T)).$$ Comparing the two expressions,
		\[
		r(\Delta_A(T)) = e^{-\alpha\lambda}\, r(\Delta_A(T)) \qquad \text{for all } \lambda \in \mathbb{R} .
		\]
			Taking any $\lambda \neq 0$, we have $e^{-\alpha\lambda} \neq 1$ because $\alpha \neq 0$, and this forces $r(\Delta_A(T)) = 0$.
	\end{proof}
	
	\begin{example}\label{ex:mixed}
		In the setting of Proposition~\ref{prop:mixed} the matrix $T$ itself need not be nilpotent. Let
		\[
		A = \begin{pmatrix} 1 & 0 \\ 0 & 0 \end{pmatrix}, \qquad T = \begin{pmatrix} 1 & 0 \\ 1 & 1 \end{pmatrix} .
		\]
		Then
		\[
		\Delta_A(T) = \begin{pmatrix} 0 & 0 \\ -1 & 0 \end{pmatrix}, \qquad
		\Delta_A^2(T) = \begin{pmatrix} 0 & 0 \\ 1 & 0 \end{pmatrix} = -\,\Delta_A(T),
		\]
		so the relation~\eqref{eq:general_relation} holds with $\alpha = 1$ and $\beta = 0$, that is, with roots $z_1 = 0$ and $z_2 = -1$. Here $\Delta_A(T)$ is nilpotent, while $T$ is invertible with both eigenvalues equal to $1$.
	\end{example}
	
	\bigskip
	\section{Dimensional Resonance and the Hyperbolic Case}\label{sec:resonance}
	\bigskip
	
	In the hyperbolic regime $\beta < 0$ the roots are real and straddle the origin: $z_1 > 0 > z_2$. The orbit $F(\lambda) = e^{z_1\lambda} C_1 + e^{z_2\lambda} C_2$ diverges in both temporal directions, so the one-sided argument of Section~\ref{sec:nonzero_real} does not apply. Nilpotency is now governed by an arithmetic condition relating the exponents and the dimension.
	
	\subsection{The resonance dichotomy}
	
	\begin{theorem}\label{thm:resonance}
		Let $z_1 > 0 > z_2$ and suppose~\eqref{eq:general_relation} holds with these roots. Consider the condition
		\begin{equation}\label{eq:resonance}
			(\star)\qquad j\, z_1 + (k-j)\, z_2 \neq 0 \quad\text{for all } k \in \{1,\dots,n\},\ j \in \{1,\dots,k-1\}.
		\end{equation}
		\begin{enumerate}[\rm($i$)]
			\item If $(\star)$ holds, then every $T \in M_n(\mathbb{C})$ satisfying~\eqref{eq:general_relation} with roots $z_1,z_2$ is nilpotent, and so is $\Delta_A(T)$.
			\item If $(\star)$ fails, then there exist $A, T \in M_n(\mathbb{C})$ satisfying~\eqref{eq:general_relation} such that neither $T$ nor $\Delta_A(T)$ is nilpotent.
		\end{enumerate}
	\end{theorem}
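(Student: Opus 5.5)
For part ($i$) we work with power traces, since norm estimates are unavailable in this regime. We write $F(\lambda) = e^{z_1\lambda}C_1 + e^{z_2\lambda}C_2$ with $C_1,C_2$ as in \eqref{eq:C1C2_explicit}. For $1\le k\le n$ we expand $F(\lambda)^k$ into a sum over words in $C_1,C_2$ and group the words by the number $j$ of factors equal to $C_1$. This gives
\[
\Tr\bigl(F(\lambda)^k\bigr) = \sum_{j=0}^{k} e^{(j z_1 + (k-j) z_2)\lambda}\, \tau_{k,j}, \qquad \tau_{k,j} := \sum_{\text{words with } j \text{ copies of } C_1} \Tr(\text{word}).
\]
By \eqref{eq:spectral_invariance} the left-hand side equals the constant $\Tr(T^k) = \Tr(T^k)e^{0\cdot\lambda}$. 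The exponents $jz_1+(k-j)z_2$ for $j=0,\dots,k$ are pairwise distinct, because $z_1\neq z_2$. The exponents $kz_2<0$ (for $j=0$) and $kz_1>0$ (for $j=k$) are nonzero, and $(\star)$ says exactly that the remaining exponents $1\le j\le k-1$ are nonzero too. So the constant term $\Tr(T^k)$ carries an exponent $0$ that appears nowhere on the right.

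Moving everything to one side and applying Lemma~\ref{lem:exp_independence} to these distinct exponents, the coefficient of $e^{0\cdot\lambda}$ must vanish, so $\Tr(T^k)=0$ for every $1\le k\le n$. Lemma~\ref{lem:newton} then gives that $T$ is nilpotent. By Remark~\ref{rem:shift_trick}, $\Delta_A(T)$ satisfies the same relation, so the same argument shows it is nilpotent. The only care needed in this step is the bookkeeping that identifies the exponents, and the observation that the endpoint cases $j=0$ and $j=k$ are automatically nonzero.

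For part ($ii$), suppose $(\star)$ fails, say $jz_1+(k-j)z_2=0$ with $1\le j<k\le n$. We build an explicit cyclic example on $\mathbb{C}^k$ and pad it with zero blocks if $k<n$. The padding keeps the relation true, but it makes $T$ and $\Delta_A(T)$ singular. They are still non-nilpotent, which is all part ($ii$) requires.

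The key observation is that if $A$ is diagonal, $A=\mathrm{diag}(a_1,\dots,a_k)$, then $\Delta_A$ acts on the matrix unit $E_{rs}$ by the scalar $a_r-a_s$. Hence any $T$ supported on entries with $a_r-a_s\in\{z_1,z_2\}$ satisfies \eqref{eq:general_relation}, because each such entry is annihilated by $P(\Delta_A)$.

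We take $T = \sum_{r} E_{r,\pi(r)}$ for the cyclic permutation $\pi(r) = r+j \pmod k$. We want to choose the $a_r$ so that each step $r\mapsto r+j$ changes $a$ by $z_1$ or by $z_2$. Concretely, we set $a_r$ to be $z_1$ times the number of wraparounds, plus an appropriate linear term, so that a non-wrapping step contributes one value and a wrapping step the other. Going once around the cycle, which has length $k$ when $\gcd(j,k)=1$, the total change in $a$ must be $0$. Exactly $j$ of the steps wrap, so the total is $jz_1+(k-j)z_2=0$, and the resonance condition supplies precisely this. If $\gcd(j,k)>1$, we first reduce to the lowest-terms pair $p+q\le k$ and work on $\mathbb{C}^{p+q}$.

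With these choices, $T$ is a permutation matrix and hence invertible. Moreover $\Delta_A(T) = \sum_r (a_r-a_{\pi(r)})E_{r,\pi(r)}$ is a weighted permutation matrix whose weights are all nonzero (each equals $\pm z_1$ or $\pm z_2$, depending on the sign convention), so it is invertible too.

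The main obstacle is this last step: choosing the $a_r$ consistently around the cycle so that every step has difference in $\{z_1,z_2\}$. We plan to verify it using the $j$-step ordering of $\mathbb{Z}/k$, where the wraparound steps are exactly the steps that cross from residues $\ge k-j$ back down.
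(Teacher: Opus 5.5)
Your proposal is correct and follows the paper's proof. Part ($i$) is the same trace-expansion, exponential-independence and Newton-identity argument, and part ($ii$) is the same diagonal-$A$, cyclic-permutation construction; the paper first reduces to $m=p+q$ and uses the one-step cycle with $d_{i+1}=d_i+\delta_i$, whereas you use the $j$-step cycle on $\mathbb{C}^k$. Since $\Delta_A(E_{rs})=(a_r-a_s)E_{rs}$, fix the sign by requiring $a_r-a_{\pi(r)}\in\{z_1,z_2\}$: the choice $a_r=-(z_2/j)\,r$ gives $z_2$ on non-wrapping steps and $-(k-j)z_2/j=z_1$ on the $j$ wrapping steps for any $\gcd(j,k)$, so your flagged obstacle disappears and the reduction to lowest terms is not needed.
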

	\begin{proof}
		($i$) Fix $k \le n$. By invariance, $\Tr(F(\lambda)^k) = \Tr(T^k)$ is constant in $\lambda$. Writing $F(\lambda) = e^{z_1\lambda}C_1 + e^{z_2\lambda}C_2$ and expanding the $k$-th power,
		\begin{equation}\label{eq:trace_expansion_detail}
			\Tr(F(\lambda)^k) = \sum_{j=0}^{k} e^{\mu_j \lambda}\, \Tr(W_j), \qquad \mu_j := j\, z_1 + (k-j)\, z_2,
		\end{equation}
		where $W_j$ is the sum of the ordered products of $C_1, C_2$ containing exactly $j$ factors $C_1$ and $k-j$ factors $C_2$ (each such product contributes the exponential $e^{\mu_j\lambda}$).
		The endpoints satisfy $\mu_0 = k z_2 \neq 0$ and $\mu_k = k z_1 \neq 0$, while the interior exponents $\mu_1,\dots,\mu_{k-1}$ are nonzero by $(\star)$. Moreover, the exponents $\mu_j$ are pairwise distinct, since consecutive differences equal $z_1 - z_2 > 0$. Therefore, the function
		\[
		\lambda \mapsto \sum_{j=0}^{k} e^{\mu_j \lambda}\, \Tr(W_j) - \Tr(T^k)\, e^{0 \cdot \lambda}
		\]
		vanishes identically on $\mathbb{R}$, and its exponents $\mu_0, \dots, \mu_k, 0$ are pairwise distinct. Lemma~\ref{lem:exp_independence} now gives $\Tr(W_j) = 0$ for all $j$, as well as $\Tr(T^k) = 0$. As $1 \le k \le n$ was arbitrary, Lemma~\ref{lem:newton} shows that $T$ is nilpotent. By Remark~\ref{rem:shift_trick}, $\Delta_A(T)$ is nilpotent too.
		
			($ii$) If $(\star)$ fails, then there are $k_0 \leq n$ and $j_0 \in \{1,\dots,k_0-1\}$ with
		\begin{equation}\label{eq:resonance_failure}
			j_0 z_1 + (k_0 - j_0)\, z_2 = 0 .
		\end{equation}
			In particular $z_1/z_2 = -(k_0-j_0)/j_0$ is rational, so we may write $z_1/z_2 = -p/q$ in lowest terms. Dividing~\eqref{eq:resonance_failure} by $z_2 \neq 0$ and then multiplying by $q$ gives $j_0 p = q(k_0 - j_0)$, that is,
		\[
		q\, k_0 = j_0\, (p+q) .
		\]
		Since $\gcd(p+q,q) = \gcd(p,q) = 1$, it follows that $p+q$ divides $k_0$, and therefore $p + q \leq k_0 \leq n$. 	Proposition~\ref{prop:construction} below now produces $A, T \in M_n(\mathbb{C})$ satisfying~\eqref{eq:general_relation} with $T$ and $\Delta_A(T)$ not nilpotent.
	\end{proof}
	
	\subsection{An explicit cyclic construction}
	
\begin{proposition}\label{prop:construction}
	Let $z_1 > 0 > z_2$ with $z_1/z_2 = -p/q$ in lowest terms, and set $m = p+q$. 	Then there exist $A, T \in M_m(\mathbb{C})$ with $T$ and $\Delta_A(T)$ invertible satisfying
	\[
	\Delta_A^2(T) + \alpha\, \Delta_A(T) + \beta\, T = 0, \qquad \alpha = -(z_1+z_2),\ \ \beta = z_1 z_2 .
	\]
		Consequently, for every $n \geq m$ the matrices $A \oplus 0_{\,n-m}$ and $T \oplus 0_{\,n-m}$ satisfy the same relation in $M_n(\mathbb{C})$, and neither $T \oplus 0_{\,n-m}$ nor its commutator with $A \oplus 0_{\,n-m}$ is nilpotent.
\end{proposition}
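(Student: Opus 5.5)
The plan is to take $A$ diagonal and $T$ a cyclic permutation matrix, with the diagonal entries of $A$ chosen so that each entry of the cycle is an eigenvector of $\Delta_A$ with eigenvalue $z_1$ or $z_2$.

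The key observation is how a diagonal $A=\mathrm{diag}(a_1,\dots,a_m)$ acts on the matrix units $E_{ij}$. For such $A$,
\[
\Delta_A(E_{ij}) = (a_i-a_j)\,E_{ij},
\]
so every matrix unit is an eigenvector of $\Delta_A$. Hence, for any matrix $T=\sum t_{ij}E_{ij}$,
\[
\Delta_A^2(T)+\alpha\Delta_A(T)+\beta T=\sum_{i,j} P(a_i-a_j)\,t_{ij}E_{ij}, \qquad P(z)=z^2+\alpha z+\beta=(z-z_1)(z-z_2).
\]
Therefore the relation holds exactly when every difference $a_i-a_j$ on the support of $T$ is a root $z_1$ or $z_2$.

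Take $T$ to be the cyclic shift $T=\sum_{i=1}^{m}E_{i,i+1}$, with indices taken mod $m$. This is a permutation matrix, so it is invertible and $T^m=I$. Its support consists of the $m$ pairs $(i,i+1)$. Set $d_i=a_i-a_{i+1}$. We need each $d_i\in\{z_1,z_2\}$, together with the consistency condition around the cycle, $\sum_{i=1}^m d_i=0$. The relation $z_1/z_2=-p/q$ gives $qz_1+pz_2=0$. So we choose $q$ of the $d_i$ equal to $z_1$ and $p$ of them equal to $z_2$, which uses $p+q=m$ steps in total. Then we define $a_1=0$ and $a_{i+1}=a_i-d_i$ for $i=1,\dots,m-1$. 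The closing condition $a_m-a_1=d_m$ is automatic because the $d_i$ sum to zero. With this choice the relation holds by the computation above.

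Next we check that the commutator is invertible. We have
\[
\Delta_A(T)=\sum_i d_iE_{i,i+1},
\]
a weighted cyclic shift. Its determinant is $\pm\prod_i d_i$, which is nonzero since $z_1,z_2\neq0$. Indeed $(\Delta_A(T))^m=\bigl(\prod_i d_i\bigr)I$.

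For the embedding into $M_n(\mathbb{C})$, note that $\Delta_{A\oplus0}(T\oplus0)=\Delta_A(T)\oplus0$, and similarly for the second iterate, because the block-diagonal structure is preserved. So the relation persists after padding with zeros. The $m$-th powers are $I_m\oplus0$ and $\bigl(\prod_i d_i\bigr)I_m\oplus0$, both nonzero, so neither padded matrix is nilpotent.

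The only delicate step is the consistency condition around the cycle, $\sum_i d_i=0$. That is exactly where the arithmetic $qz_1+pz_2=0$ and the choice of size $m=p+q$ enter; everything else is routine verification.
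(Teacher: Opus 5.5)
Your proof is correct and is essentially the paper's construction. You take a diagonal $A$ whose consecutive differences around an $m$-cycle are $q$ copies of $z_1$ and $p$ copies of $z_2$, which sum to zero because $qz_1+pz_2=0$, and you pair it with a cyclic permutation matrix $T$ whose support entries are eigenvectors of $\Delta_A$ with eigenvalues that are roots of $P$. The differences from the paper are cosmetic: you use the transposed shift, and you argue non-nilpotency through $(\Delta_A(T))^m=\bigl(\prod_i d_i\bigr)I$, whose padded version is a nonzero multiple of the idempotent $I_m\oplus 0$ and so has no vanishing power. The paper does this step with eigenvalues and the determinant instead.
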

\begin{proof}
	Let $D = \operatorname{diag}(d_1,\dots,d_m)$ be diagonal and let $X \in M_m(\mathbb{C})$ have entries $X_{ij}$. Then $(DX)_{ij} = d_i X_{ij}$ and $(XD)_{ij} = d_j X_{ij}$, so that
	\[
	\Delta_D(X)_{ij} = (d_i - d_j)\, X_{ij}, \qquad 1 \leq i,j \leq m ,
	\]
	and, iterating, $\Delta_D^{\ell}(X)_{ij} = (d_i - d_j)^{\ell} X_{ij}$ for every $\ell \geq 0$. Consequently, writing $P(z) = z^2 + \alpha z + \beta$ for the characteristic polynomial,
	\[
	\bigl(\Delta_D^2(X) + \alpha\, \Delta_D(X) + \beta\, X\bigr)_{ij} = P(d_i - d_j)\, X_{ij} .
	\]
	Hence every matrix $X$ with $X_{ij} = 0$ whenever $d_i - d_j \notin \{z_1, z_2\}$ satisfies the relation, since each entry above vanishes either because $X_{ij} = 0$ or because $d_i - d_j$ is a root of $P$.
	
	\medskip 
	
	Now choose $\delta_1,\dots,\delta_m \in \{z_1, z_2\}$ with exactly $q$ of them equal to $z_1$ and $p$ equal to $z_2$. Since $z_1/z_2 = -p/q$ gives $q z_1 = -p z_2$, we obtain
	\[
	\sum_{i=1}^{m} \delta_i = q\, z_1 + p\, z_2 = 0 .
	\]
	Set $d_1 = 0$ and $d_{i+1} = d_i + \delta_i$ for $i = 1,\dots,m-1$, so that $d_{i+1} - d_i = \delta_i$ for these $i$. Since $d_m = \sum_{i=1}^{m-1} \delta_i = -\delta_m$, we also have $d_1 - d_m = \delta_m$. Thus all $m$ differences $d_2 - d_1, \dots, d_m - d_{m-1}, d_1 - d_m$ belong to $\{z_1, z_2\}$.
	
	Now let $A = \operatorname{diag}(d_1,\dots,d_m)$ with these $d_i$, and let
	\[
	T = \begin{pmatrix}
		0 & 0 & \cdots & 0 & 1 \\
		1 & 0 & \cdots & 0 & 0 \\
		0 & 1 & \cdots & 0 & 0 \\
		\vdots & \vdots & \ddots & \vdots & \vdots \\
		0 & 0 & \cdots & 1 & 0
	\end{pmatrix}.
	\]
		That is, the nonzero entries of $T$ are $T_{i+1,i} = 1$ for $1 \leq i \leq m-1$ and $T_{1,m} = 1$, and all remaining entries are zero. The differences attached to the nonzero positions are $\delta_1, \dots, \delta_m$, which lie in $\{z_1, z_2\}$ by the previous paragraph. Hence $T_{ij} = 0$ whenever $d_i - d_j \notin \{z_1, z_2\}$, and the first paragraph shows that
	\[
	\Delta_A^2(T) + \alpha\, \Delta_A(T) + \beta\, T = 0 .
	\]
	
	The matrix $T$ is a permutation matrix, hence invertible, and its eigenvalues are the $m$-th roots of unity. Moreover, $\Delta_A(T)_{i+1,i} = \delta_i$ for $1 \leq i \leq m-1$ and $\Delta_A(T)_{1,m} = \delta_m$, while all other entries vanish. Thus $\Delta_A(T)$ has the same pattern of nonzero positions as $T$, with the entries $\delta_1, \dots, \delta_m$ in place of the ones, 	so $\det \Delta_A(T) = (-1)^{m-1}\, \delta_1 \cdots \delta_m \neq 0$ and $\Delta_A(T)$ is invertible as well.
	
	Finally, let $A' = A \oplus 0_{\,n-m}$ and $T' = T \oplus 0_{\,n-m}$. The zero matrix satisfies the relation trivially, so $A'$ and $T'$ satisfy it blockwise, and $\Delta_{A'}(T') = \Delta_A(T) \oplus 0_{\,n-m}$. The nonzero eigenvalues of $T$ and of $\Delta_A(T)$ persist in these direct sums, so neither $T'$ nor $\Delta_{A'}(T')$ is nilpotent.
\end{proof}
	
	\begin{corollary}\label{cor:irrational}
		If $z_1/z_2 \notin \mathbb{Q}$, then $T$ and $\Delta_A(T)$ are nilpotent, for every dimension $n$.
	\end{corollary}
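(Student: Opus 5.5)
The plan is to reduce the corollary to Theorem~\ref{thm:resonance}($i$) by showing that condition $(\star)$ holds automatically when $z_1/z_2$ is irrational, in every dimension $n$. Implicitly the corollary lives in the hyperbolic regime $z_1>0>z_2$ of this section. (If one wants it for arbitrary real roots, note that irrationality of $z_1/z_2$ requires both roots nonzero and distinct, so the remaining real configurations with $z_1z_2>0$ are already covered by Theorem~\ref{thm:same_sign}; complex-conjugate roots fall outside this section.)

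The key step is to fix $n$, take $k\in\{1,\dots,n\}$ and $j\in\{1,\dots,k-1\}$, and suppose for contradiction that $j z_1+(k-j)z_2=0$. Since $z_2\neq 0$, dividing gives
\[
\frac{z_1}{z_2}=-\frac{k-j}{j}\in\mathbb{Q},
\]
because $j\ge 1$ makes the denominator nonzero. This contradicts the hypothesis, so $(\star)$ holds for every $k\le n$. Theorem~\ref{thm:resonance}($i$) then gives that $T$ is nilpotent, and that $\Delta_A(T)$ is nilpotent via Remark~\ref{rem:shift_trick}. Since $n$ was arbitrary, the conclusion holds in every dimension.

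There is essentially no obstacle here. The only point requiring care is making explicit that the corollary is stated within the hyperbolic setting of Theorem~\ref{thm:resonance}, so that its hypotheses $z_1>0>z_2$ are in force.
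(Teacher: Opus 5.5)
Your proposal is correct and is essentially the paper's own proof: you show that an irrational ratio $z_1/z_2$ rules out every resonance $j z_1 + (k-j) z_2 = 0$, so $(\star)$ holds in every dimension, and then you invoke Theorem~\ref{thm:resonance}($i$). Your explicit note that the corollary sits in the hyperbolic regime $z_1>0>z_2$ is a useful clarification that the paper leaves implicit.
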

	\begin{proof}
		An irrational ratio admits no solution of $j z_1 + (k-j)z_2 = 0$ with $j \in \{1,\dots,k-1\}$, so $(\star)$ holds for all $n$. Now apply Theorem~\ref{thm:resonance}($i$).
	\end{proof}
	
	\begin{corollary}\label{cor:dimensional}
		Let $z_1 > 0 > z_2$ and write $z_1/z_2 = -p/q$ in lowest terms. Then:
		\begin{enumerate}[\rm($i$)]
			\item if $p + q > n$, then every $T \in M_n(\mathbb{C})$ satisfying~\eqref{eq:general_relation} with roots $z_1, z_2$ is nilpotent, and so is $\Delta_A(T)$;
			\item if $p + q \leq n$, then there exist $A, T \in M_n(\mathbb{C})$ satisfying~\eqref{eq:general_relation} such that neither $T$ nor $\Delta_A(T)$ is nilpotent.
		\end{enumerate}
	\end{corollary}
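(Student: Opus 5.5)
The plan is to reduce both parts to Theorem~\ref{thm:resonance} by showing that, for $z_1 > 0 > z_2$ with $z_1/z_2 = -p/q$ in lowest terms, condition $(\star)$ holds exactly when $p+q > n$. Part ($ii$) then needs no new work: if $p+q \le n$, Proposition~\ref{prop:construction} with $m = p+q$ gives $A, T \in M_m(\mathbb{C})$, and the padded matrices $A \oplus 0_{n-m}$ and $T \oplus 0_{n-m}$ satisfy~\eqref{eq:general_relation} in $M_n(\mathbb{C})$ without either matrix or its commutator being nilpotent. (Equivalently, take $k = p+q$ and $j = q$ to get $q z_1 + p z_2 = 0$, so $(\star)$ fails, and apply Theorem~\ref{thm:resonance}($ii$).)

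For part ($i$), I will prove the contrapositive: if $(\star)$ fails, then $p+q \le n$. Suppose $j z_1 + (k-j) z_2 = 0$ for some $1 \le k \le n$ and $1 \le j \le k-1$. Dividing by $z_2$ and substituting $z_1/z_2 = -p/q$ gives $jp = q(k-j)$, that is, $qk = j(p+q)$. Since $\gcd(p+q, q) = \gcd(p,q) = 1$, we get $(p+q) \mid k$, so $p+q \le k \le n$. This is the divisibility step already carried out in the proof of Theorem~\ref{thm:resonance}($ii$), and it is the only point needing care. Hence $p+q > n$ forces $(\star)$ to hold, and Theorem~\ref{thm:resonance}($i$) gives nilpotency of $T$ and of $\Delta_A(T)$, the latter via Remark~\ref{rem:shift_trick}.

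There is no genuine obstacle here; the corollary is bookkeeping on top of Theorem~\ref{thm:resonance} and Proposition~\ref{prop:construction}. The one point to state explicitly is that the hypothesis already makes $z_1/z_2$ rational. The irrational case is handled separately by Corollary~\ref{cor:irrational}.
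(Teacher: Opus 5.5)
Your proposal is correct and follows essentially the same route as the paper. The divisibility argument $qk = j(p+q)$ with $\gcd(p+q,q)=1$ shows that $(\star)$ can fail only if $p+q \le n$, and your choice $k=p+q$, $j=q$ (the case $t=1$ of the paper's $k=(p+q)t$, $j=qt$) shows that it does fail when $p+q \le n$; both parts then follow from Theorem~\ref{thm:resonance} and Proposition~\ref{prop:construction}, with the paper merely packaging the two directions into the single equivalence ``$(\star)$ fails if and only if $p+q\le n$'' before invoking the theorem.
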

	\begin{proof}
		Fix $k \in \{1,\dots,n\}$ and let $j \in \{1,\dots,k-1\}$. As in the proof of Theorem~\ref{thm:resonance}($ii$), the equality $j z_1 + (k-j) z_2 = 0$ is equivalent to
		\[
		q\, k = j\, (p+q) .
		\]
		Suppose first that some $j \in \{1,\dots,k-1\}$ satisfies this equality. Then $p+q$ divides $q k$, and since $\gcd(p+q,q) = \gcd(p,q) = 1$, it divides $k$. Suppose conversely that $p+q$ divides $k$, say $k = (p+q)t$ with $t$ a positive integer, and put $j = qt$. Then $q k = q(p+q)t = j(p+q)$, so the equality holds. Moreover $j = qt \geq 1$ and $k - j = pt \geq 1$, so that $j$ lies in the required range. Therefore such a $j$ exists if and only if $p + q$ divides $k$.
		
		Consequently, $(\star)$ fails if and only if $p+q$ divides some $k \in \{1,\dots,n\}$, and this happens exactly when $p + q \leq n$. Indeed, if $p+q \leq n$ we may take $k = p+q$, while conversely every positive multiple of $p+q$ is at least $p+q$.
		
		Thus $(\star)$ holds when $p + q > n$, and ($i$) follows from Theorem~\ref{thm:resonance}($i$). When $p + q \leq n$ the condition $(\star)$ fails, and ($ii$) follows from Theorem~\ref{thm:resonance}($ii$).
	\end{proof}

	\subsection{Constructive counterexamples}
	
	\begin{example}\label{ex:opposite_2x2}
		Let
		\[
		A = \begin{pmatrix} 1 & 0 \\ 0 & -1 \end{pmatrix}, \qquad
		T = \begin{pmatrix} 0 & 1 \\ 1 & 0 \end{pmatrix}
		\]
		in $M_2(\mathbb{C})$. A direct computation gives
		\[
		\Delta_A(T) = \begin{pmatrix} 0 & 2 \\ -2 & 0 \end{pmatrix}, \qquad
		\Delta_A^2(T) = \begin{pmatrix} 0 & 4 \\ 4 & 0 \end{pmatrix} = 4\, T ,
		\]
		so that $\Delta_A^2(T) - 4T = 0$, with roots $z_1 = 2$ and $z_2 = -2$. Here $p = q = 1$ and $p + q = 2 = n$, so $(\star)$ fails at $k = 2$. Both $T$ and $\Delta_A(T)$ are invertible, with $\det T = -1$ and $\det \Delta_A(T) = 4$, and in particular neither is nilpotent. This is the case $m = 2$ of Proposition~\ref{prop:construction}, with $\delta_1 = z_1$ and $\delta_2 = z_2$.
	\end{example}
	
	\begin{example}\label{ex:3x3_resonance}
		Let
		\[
		A = \begin{pmatrix} 1 & 0 & 0 \\ 0 & -1 & 0 \\ 0 & 0 & 0 \end{pmatrix}, \qquad
		T = \begin{pmatrix} 0 & 1 & 0 \\ 0 & 0 & 1 \\ 1 & 0 & 0 \end{pmatrix}
		\]
		in $M_3(\mathbb{C})$. A direct computation gives
		\[
		\Delta_A(T) = \begin{pmatrix} 0 & 2 & 0 \\ 0 & 0 & -1 \\ -1 & 0 & 0 \end{pmatrix}, \qquad
		\Delta_A^2(T) = \begin{pmatrix} 0 & 4 & 0 \\ 0 & 0 & 1 \\ 1 & 0 & 0 \end{pmatrix} ,
		\]
		so that $$\Delta_A^2(T) - \Delta_A(T) - 2T = 0,$$ with roots $z_1 = 2$ and $z_2 = -1$. Here $p = 2$, $q = 1$ and $p + q = 3 = n$, the resonance occurring at $k = 3$. Both $T$ and $\Delta_A(T)$ are invertible, with $\det T = 1$ and $\det \Delta_A(T) = 2$, and the eigenvalues of $T$ are the cube roots of unity. This is the case $m = 3$ of Proposition~\ref{prop:construction}, with one difference equal to $z_1$ and two equal to $z_2$.
	\end{example}
	
	\bigskip
	\section{Higher-Order Commutator Relations}\label{sec:higher}
	\bigskip
	
	The one-sided squeezing argument is not tied to the order of the relation, and we isolate it here as a general half-plane principle. From this viewpoint, Sections~\ref{sec:imaginary}--\ref{sec:resonance} treat in full the boundary cases of this principle at order two and with real coefficients, in which the roots meet or straddle the imaginary axis.
	
	\begin{theorem}\label{thm:higher}
		Let $m \geq 1$ and let $c_0,\dots,c_m \in \mathbb{C}$ with $c_m \neq 0$. Suppose that
		\begin{equation}\label{eq:higher_relation}
			\sum_{k=0}^{m} c_k\, \Delta_A^k(T) = 0
		\end{equation}
		and that there exists $\theta \in \mathbb{R}$ such that
		\[
		\Rea\bigl(e^{-i\theta} z\bigr) > 0 \qquad \text{for every root } z \text{ of } P(z) = \sum_{k=0}^{m} c_k z^k .
		\]
		Then $\Delta_A^j(T)$ is nilpotent for every $j \geq 0$. In particular, both $T$ and $\Delta_A(T)$ are nilpotent.
	\end{theorem}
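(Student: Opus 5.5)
The plan is to run the one-sided squeezing argument of Section~\ref{sec:nonzero_real}, but along a complex ray instead of the real line. The orbit is therefore extended to complex parameters: set $G(w) = e^{wA}\,T\,e^{-wA}$ for $w \in \mathbb{C}$. Then $G(w)$ is similar to $T$ for every $w$, so $r(G(w)) = r(T)$. Restrict to the ray $w = s e^{i\theta}$, $s \in \mathbb{R}$, and put $H(s) = G(se^{i\theta}) = e^{s B} T e^{-sB}$ with $B = e^{i\theta}A$. Since $\Delta_B = e^{i\theta}\Delta_A$, Lemma~\ref{lem:derivatives} applied to $B$ gives
\[
H^{(k)}(s) = e^{sB}\,e^{ik\theta}\Delta_A^k(T)\,e^{-sB}.
\]
Relation~\eqref{eq:higher_relation} then becomes the constant-coefficient linear ODE
\[
\sum_{k} c_k e^{-ik\theta} H^{(k)}(s) = 0.
\]
Its characteristic polynomial is $Q(\zeta) = \sum_k c_k e^{-ik\theta}\zeta^k = P(e^{-i\theta}\zeta)$, so its roots are $\zeta = e^{i\theta} z$.

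The hypothesis states $\Rea(e^{-i\theta}z) > 0$. To get the correct rotation I would instead use the ray $w = s e^{-i\theta}$, i.e.\ $B = e^{-i\theta}A$. Then the roots of the transformed polynomial are exactly $\zeta_\ell = e^{-i\theta} z_\ell$, and all of them satisfy $\Rea \zeta_\ell > 0$. The standard theory of linear constant-coefficient ODEs (applied entrywise, with $c_m \neq 0$ ensuring the equation has genuine order $m$) gives
\[
H(s) = \sum_\ell e^{\zeta_\ell s}\, p_\ell(s),
\]
where each $p_\ell$ is a matrix polynomial of degree less than the multiplicity of $\zeta_\ell$. Setting $\gamma = \min_\ell \Rea\zeta_\ell > 0$, we obtain $\|H(s)\| \le e^{\gamma s}\,\mathrm{poly}(|s|) \to 0$ as $s \to -\infty$. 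Lemma~\ref{lem:spectral_tools}($iii$) then gives $r(T) = r(H(s)) \le \|H(s)\| \to 0$, so $T$ is nilpotent.

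For the iterated commutators, the shift trick of Remark~\ref{rem:shift_trick} works verbatim at any order: applying $\Delta_A$ to \eqref{eq:higher_relation} shows that $\Delta_A(T)$ satisfies the same relation. By induction, every $\Delta_A^j(T)$ is a solution, and the argument above applies to each of them.

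The main point requiring care is the complex-parameter step. Lemma~\ref{lem:derivatives} is stated for real $\lambda$, but its proof goes through unchanged for $B = e^{-i\theta}A$ in place of $A$, so I would simply invoke it with $B$; $e^{sB}$ remains invertible, so similarity holds. The other small point is bookkeeping the rotation sign so that the roots land in the right half-plane. As a sanity check, one can alternatively substitute $e^{i\theta}$ for the direction and use $s \to +\infty$ if the sign comes out reversed. Either way, all roots lie strictly on one side, so decay happens along one direction of the ray.
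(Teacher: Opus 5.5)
Your proposal is correct and follows the paper's proof: the paper also replaces $A$ by $B = e^{-i\theta}A$, observes that the rotated relation has characteristic polynomial $P(e^{i\theta}w)$ with roots $e^{-i\theta}z$ in the open right half-plane, runs the one-sided decay argument as the real parameter tends to $-\infty$, and then uses the shift trick for the iterated commutators. Your complex-ray framing is just another way of describing the same substitution, and the rotation sign you finally settle on is the right one.
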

	\begin{proof}
		We first treat the case $\theta = 0$, so that every root of $P$ has positive real part. By Lemma~\ref{lem:derivatives}, the orbit $F(\lambda) = e^{\lambda A} T e^{-\lambda A}$ satisfies
		\[
		\sum_{k=0}^{m} c_k\, F^{(k)}(\lambda) = e^{\lambda A}\Bigl(\sum_{k=0}^{m} c_k\, \Delta_A^k(T)\Bigr) e^{-\lambda A} = 0 ,
		\]
		which is a linear differential equation with constant coefficients and characteristic polynomial $P$. Hence
		\[
		F(\lambda) = \sum_{i} P_i(\lambda)\, e^{z_i \lambda} ,
		\]
		where $z_1, \dots, z_s$ are the distinct roots of $P$ and each $P_i$ is a matrix polynomial of degree less than the multiplicity of $z_i$.
		
		Let $\lambda \to -\infty$. For each $i$ we have $\lvert e^{z_i \lambda}\rvert = e^{\Rea(z_i)\lambda} \to 0$, and this exponential decay dominates the polynomial growth of $P_i(\lambda)$, so every term tends to $0$ and $\left\lVert F(\lambda)\right\rVert \to 0$. Combining spectral invariance~\eqref{eq:spectral_invariance} with Lemma~\ref{lem:spectral_tools}($iii$),
		\[
		0 \leq r(T) = r(F(\lambda)) \leq \left\lVert F(\lambda)\right\rVert \longrightarrow 0 ,
		\]
		so $r(T) = 0$ and $T$ is nilpotent.
		
		For a general $\theta$, put $B = e^{-i\theta} A$. Then $\Delta_B = e^{-i\theta}\, \Delta_A$, hence $\Delta_A^k(T) = e^{ik\theta}\, \Delta_B^k(T)$, and~\eqref{eq:higher_relation} may be rewritten as
	\[
	\sum_{k=0}^{m} c_k\, e^{ik\theta}\, \Delta_B^k(T) = \sum_{k=0}^{m} c_k\, \Delta_A^k(T) = 0 .
	\]
	This is a relation of the same form for $B$, with associated polynomial $$Q(w) = \sum_{k} c_k e^{ik\theta} w^k = P(e^{i\theta} w).$$ Its roots are the numbers $e^{-i\theta} z$ with $z$ a root of $P$, and these have positive real part by hypothesis. The case already treated, applied to $B$ in place of $A$, shows that $T$ is nilpotent.
		
		It remains to pass to the iterated commutators. Put $S = \Delta_A(T)$. Applying $\Delta_A$ to~\eqref{eq:higher_relation} gives
		\[
		\sum_{k=0}^{m} c_k\, \Delta_A^k(S) = \Delta_A\Bigl(\sum_{k=0}^{m} c_k\, \Delta_A^k(T)\Bigr) = \Delta_A(0) = 0 ,
		\]
		so $S$ satisfies the same relation, with the same polynomial $P$, and is therefore nilpotent by what we have just proved. Iterating this argument yields the nilpotency of $\Delta_A^j(T)$ for every $j \geq 0$.
	\end{proof}
	
	\begin{remark}\label{rem:higher_sharp}
		Geometrically, the hypothesis says that the roots of $P$ lie in an open half-plane whose boundary passes through the origin. The choices $\theta = 0$ and $\theta = \pi$ recover the two configurations in which all roots have real part of the same sign.
	\end{remark}
	
	\begin{remark}\label{rem:higher_sharp2}
			The hypothesis cannot be dropped. If the roots of $P$ do not lie in a common open half-plane whose boundary passes through the origin, the collapse can fail, and Examples~\ref{ex:oscillatory},~\ref{ex:opposite_2x2} and~\ref{ex:3x3_resonance} exhibit non-nilpotent solutions of this kind. In each of them the roots lie on a single line through the origin and on opposite sides of it, in the first case on the imaginary axis and in the other two on the real axis.
	\end{remark}
	
	\bigskip
	\section{Classification of Nilpotency Regimes}\label{sec:classification}
	\bigskip
	
	We collect the second-order analysis into one statement.
	
	\begin{theorem}\label{thm:main}
		Let $A, T \in M_n(\mathbb{C})$ and $\alpha, \beta \in \mathbb{R}$ satisfy
		\[
		\Delta_A^2(T) + \alpha\, \Delta_A(T) + \beta\, T = 0,
		\]
		and let $z_1, z_2$ be the roots of $P(z) = z^2 + \alpha z + \beta$.
		\begin{enumerate}[\rm($i$)]
			\item \emph{Double zero root} ($\alpha = \beta = 0$): $\Delta_A(T)$ is nilpotent, while $T$ need not be, as any non-nilpotent $T$ commuting with $A$ shows.
			\item \emph{Roots with non-zero real parts of the same sign} (distinct real roots with $z_1 z_2 > 0$, a repeated root $z_0 \neq 0$, or complex conjugate roots $\gamma \pm i\omega$ with $\gamma \neq 0$): both $T$ and $\Delta_A(T)$ are nilpotent.
			\item \emph{Purely imaginary roots} ($z_{1,2} = \pm i\omega$ with $\omega \neq 0$): neither matrix need be nilpotent.
			\item \emph{One zero root and one non-zero real root} ($\beta = 0$, $\alpha \neq 0$): $\Delta_A(T)$ is nilpotent, while $T$ need not be.
				\item \emph{Opposite-sign real roots} ($\beta < 0$): if $z_1/z_2 \notin \mathbb{Q}$, or if $z_1/z_2 = -p/q$ in lowest terms with $p + q > n$, then both $T$ and $\Delta_A(T)$ are nilpotent. Otherwise there are $A$ and $T$ satisfying the relation such that neither $T$ nor $\Delta_A(T)$ is nilpotent.
		\end{enumerate}
	\end{theorem}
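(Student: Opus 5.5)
The classification theorem is an assembly of results already proved, so the plan is to go through the five cases in order, check that every real pair $(\alpha,\beta)$ falls into exactly one of them, and cite the corresponding earlier statement. For exhaustiveness, first sort by the sign of $\beta = z_1z_2$ and of the discriminant $D=\alpha^2-4\beta$.
\begin{itemize}
\item If $\beta<0$, the roots are real and of opposite signs: case ($v$).
\item If $\beta=0$, either $\alpha=0$, the double zero root of case ($i$), or $\alpha\ne0$, which is case ($iv$).
\item If $\beta>0$ and $D>0$, we have distinct real roots with $z_1z_2>0$.
\item If $\beta>0$ and $D=0$, there is a repeated root $z_0=-\alpha/2\neq 0$.
\item If $\beta>0$ and $D<0$, the roots are $\gamma\pm i\omega$ with $\gamma=-\alpha/2$. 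This is case ($iii$) when $\alpha=0$ and case ($ii$) otherwise.
\end{itemize}
So the five cases cover everything.

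For case ($i$), nilpotency of $\Delta_A(T)$ is Theorem~\ref{thm:KS}. For the failure statement, take $T=I$ (or any invertible $T$ commuting with $A$). Then $\Delta_A(T)=0$, so the relation holds, but $T$ is not nilpotent.

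Case ($ii$) is Theorems~\ref{thm:same_sign}, \ref{thm:repeated} and \ref{thm:spiral}.

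For case ($iii$), I would use Example~\ref{ex:oscillatory} with $\omega=2$. To reach a general $\omega\neq0$, replace $A$ by $\tfrac{\omega}{2}A$. Then $\Delta_{cA}=c\,\Delta_A$ turns $\Delta_A^2(T)=-4T$ into $\Delta^2(T)=-\omega^2T$, and the new commutator $\tfrac{\omega}{2}\Delta_A(T)$ is still invertible. To pass to dimension $n\ge2$, take direct sums with $0_{n-2}$ exactly as in Proposition~\ref{prop:construction}. Nonzero eigenvalues persist, so neither matrix is nilpotent.

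Case ($iv$): Proposition~\ref{prop:mixed} gives nilpotency of $\Delta_A(T)$. For the failure of $T$, rescale Example~\ref{ex:mixed} by replacing $A$ with $\alpha A$, which gives $\Delta^2(T)+\alpha\Delta(T)=0$, and pad with zeros for $n\ge2$.

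Case ($v$): take $z_1>0>z_2$. The nilpotency conclusions are Corollaries~\ref{cor:irrational} and \ref{cor:dimensional}($i$), and the converse is Corollary~\ref{cor:dimensional}($ii$).

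The main obstacle is dimensional bookkeeping in the counterexamples. The "need not be" claims in ($iii$) and ($iv$) are only available for $n\ge2$. For $n=1$ every commutator is zero, so in ($iii$) we get $\omega^2T=0$, which forces $T=0$ and the claim degenerates. I would state the counterexamples for $n\ge2$ and note this explicitly, interpreting "need not be" over all dimensions, in the same way that case ($v$) carries an explicit $n$-dependence. The remaining steps are direct citations of earlier results.
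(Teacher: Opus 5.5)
Your proposal is correct and takes the paper's approach. The paper gives no separate proof of this theorem; it simply assembles Theorem~\ref{thm:KS}, Theorems~\ref{thm:same_sign}--\ref{thm:spiral}, Proposition~\ref{prop:mixed}, Examples~\ref{ex:oscillatory} and~\ref{ex:mixed}, and Corollaries~\ref{cor:irrational} and~\ref{cor:dimensional}. Your exhaustiveness check, the rescalings $A\mapsto \tfrac{\omega}{2}A$ and $A\mapsto \alpha A$, the zero-padding, and the observation that ($iii$) degenerates at $n=1$ are details the paper leaves implicit.

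One small correction: in ($iv$) the restriction to $n\ge 2$ is unnecessary. Taking $T=I$, or any non-nilpotent $T$ commuting with $A$, gives $\Delta_A(T)=0$, so $\Delta_A^2(T)+\alpha\,\Delta_A(T)=0$ holds in every dimension. Only ($iii$) genuinely fails at $n=1$.
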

	
	\subsection{Summary of the regimes}

The behaviour of the orbit in each regime, and the resulting nilpotency conclusion, are collected in Table~\ref{tab:dynamical}. The location of the roots in terms of the parameters $\alpha$ and $\beta$ is shown in Figure~\ref{fig:spectral_map}.

\begin{table}[h!]
	\centering
	\footnotesize
	\renewcommand{\arraystretch}{1.4}
	\setlength{\tabcolsep}{4pt}
	\begin{tabular}{|l|l|l|}
		\hline
		\textbf{Roots of $P$} & \textbf{Behaviour of $F$} & \textbf{Necessarily nilpotent} \\ \hline
		$z_1 = z_2 = 0$ & linear growth & $\Delta_A(T)$ \\ \hline
		$\Rea z_1, \Rea z_2 \neq 0$, same sign & decays to $0$ in one direction & $T$ and $\Delta_A(T)$ \\ \hline
		$z_{1,2} = \pm i\omega$, $\omega \neq 0$ & bounded and periodic & neither \\ \hline
		$z_1 = 0$, $z_2 \neq 0$ real & tends to a constant in one direction & $\Delta_A(T)$ \\ \hline
		$z_1 > 0 > z_2$ & unbounded in both directions & conditional \\ \hline
	\end{tabular}
	\vspace{0.2cm}
	\caption{Behaviour of the orbit in each regime and the resulting nilpotency conclusion. In the last line the conclusion depends on the arithmetic condition $(\star)$ of Theorem~\ref{thm:resonance}.}
	\label{tab:dynamical}
\end{table}

\begin{figure}[h!]
	\centering
	\begin{tikzpicture}[scale=1.9]
		\begin{axis}[
			axis lines = middle,
			axis on top = true,
			xlabel = {$\alpha$},
			ylabel = {$\beta$},
			xmin = -3.6, xmax = 3.6,
			ymin = -2.4, ymax = 3.4,
			samples = 100,
			xtick = {0}, ytick = {0},
			xticklabels = {}, yticklabels = {},
			enlargelimits = false,
			clip = true,
			]
			\addplot [name path=parab, draw=none, domain=-3.6:3.6] {x^2/4};
			\addplot [name path=top,   draw=none, domain=-3.6:3.6] {3.4};
			\addplot [name path=zero,  draw=none, domain=-3.6:3.6] {0};
			\addplot [name path=bot,   draw=none, domain=-3.6:3.6] {-2.4};
			\addplot [blue!8]   fill between [of=parab and top];
			\addplot [green!12] fill between [of=zero and parab];
			\addplot [red!10]   fill between [of=bot and zero];
			\addplot [thick, dashed, domain=-3.6:3.6] {x^2/4};
			\draw [thick] (axis cs:-3.6,0) -- (axis cs:3.6,0);
			\node [align=center, font=\footnotesize] at (axis cs:0,2.75)
			{complex conjugate roots, $\Rea z_i \neq 0$\\ $T$ and $\Delta_A(T)$ nilpotent};
			\node [align=center, font=\scriptsize] at (axis cs:-2.55,0.45)
			{real roots,\\ same sign\\ $T$, $\Delta_A(T)$ nilpotent};
			\node [align=center, font=\scriptsize] at (axis cs:2.55,0.45)
			{real roots,\\ same sign\\ $T$, $\Delta_A(T)$ nilpotent};
			\node [align=center, font=\footnotesize] at (axis cs:0,-1.5)
			{real roots, opposite signs\\ nilpotency conditional on $(\star)$};
			\node [font=\scriptsize, rotate=0, anchor=west] at (axis cs:-2.9,2.15) {$D=0$};
			\node [anchor=west, font=\scriptsize] at (axis cs:-0.03,1.7)
			{$\alpha=0$: roots $\pm i\omega$, no nilpotency};
						\node [anchor=north west, font=\scriptsize] at (axis cs:-1.1,0.5)
			{K--S point: $\Delta_A(T)$ nilpotent};
			\node [anchor=north, font=\scriptsize] at (axis cs:-2.4,-0.05)
			{$\beta=0$: $\Delta_A(T)$ nilpotent};
			\filldraw [black] (axis cs:0,0) circle (1.6pt);
		\end{axis}
	\end{tikzpicture}
		\caption{The nilpotency regimes in the parameter plane. The dashed parabola $\beta=\alpha^2/4$ is the discriminant curve $D=0$; below it the roots of $P(z)=z^2+\alpha z+\beta$ are real, above it they are complex conjugates. Both $T$ and $\Delta_A(T)$ are nilpotent in the two regions with $\beta>0$, apart from the positive $\beta$-axis, where the roots are purely imaginary. On the axis $\beta=0$ only $\Delta_A(T)$ is nilpotent, and for $\beta<0$ the conclusion depends on the arithmetic condition $(\star)$.}\label{fig:spectral_map}
\end{figure}

\subsection{The squeezing mechanism}

Whether the collapse reaches $T$ or only $\Delta_A(T)$ is decided by whether the roots of $P$ have non-zero real part.
\begin{enumerate}[\rm($i$)]
	\item \emph{Total collapse.} If both roots have non-zero real parts of the same sign, then $\left\lVert F(\lambda)\right\rVert \to 0$ in one of the two directions. The spectral radius is constant along the orbit, so it must be zero, and both $T$ and $\Delta_A(T)$ are nilpotent.
	\item \emph{Partial collapse.} If one root is zero, the orbit need not decay and $T$ may remain non-nilpotent. Its derivative $F'$ is nevertheless governed by the remaining root alone. For $z_1 = 0$ and $z_2 \neq 0$ the derivative decays exponentially, while for the double zero root the orbit grows linearly in $\lambda$ although its spectral radius stays constant. In both cases $\Delta_A(T)$ is nilpotent.
	\item \emph{No collapse.} If the roots are purely imaginary the orbit is bounded and no conclusion follows. If they are real of opposite signs the orbit is unbounded in both directions, so the one-sided decay argument is unavailable. The traces $\Tr(T^k)$ take over in this case, and a non-zero spectrum survives exactly when $(\star)$ fails.
\end{enumerate}
	
	\bigskip
	\section{Concluding Remarks}\label{sec:conclusion}
	\bigskip
	
	We have described the spectral rigidity of commutators under second-order linear relations through the asymptotic dynamics of the similarity orbit. The mechanism, which we have called \emph{spectral squeezing}, consists in the orbit following the trajectories prescribed by the characteristic exponents while carrying a constant spectrum. The Kleinecke--Shirokov phenomenon appears as the zero-root endpoint of this mechanism. One-sided exponential bounds force global nilpotency, while hyperbolic configurations admit non-nilpotent solutions exactly when the resonance condition $(\star)$ fails (Theorem~\ref{thm:resonance}, Proposition~\ref{prop:construction}).
	
	Theorem~\ref{thm:higher} shows that the half-plane principle reaches arbitrary order: if the roots of $\sum_{k=0}^{m} c_k z^k$ lie in a single open half-plane through the origin, then every iterated commutator $\Delta_A^j(T)$ is nilpotent. Two directions remain open. The first is a higher-order analogue of the resonance dichotomy, which would classify the configurations of roots that allow non-nilpotent solutions, together with the corresponding constructions.  		The second is the transport of the collapse arguments to a  complex Banach algebra $\mathcal{A}$, with nilpotency replaced by quasi-nilpotency. The arguments based on one-sided decay of the orbit carry over, since they use only the invariance of the spectral radius under similarity together with the bound $r(x) \leq \left\lVert x \right\rVert$, valid for every $x \in \mathcal{A}$. What is lost is the continuity of the spectral radius, which is upper semicontinuous but not continuous in general, and the proof of Theorem~\ref{thm:KS} given above depends on it.

	\bigskip
	\section*{Declarations}
	\noindent{\bf{Funding}}\\
	This work has been supported by the Ministry of Science, Technological Development and Innovation of the Republic of Serbia [Grant Number: 451-03-34/2026-03/200102].
	
	\vspace{0.5cm}
	
	\noindent{\bf{Availability of data and materials}}\\
	\noindent No data were used to support this study.
	\vspace{0.5cm}\\
	\noindent{\bf{Competing interests}}\\
	\noindent The author declares that he has no competing interests.
	\vspace{0.5cm}\\
	\noindent{\bf{Declaration of generative AI and AI-assisted technologies in the manuscript preparation process}}\\
	\noindent During the preparation of this work, the author used Claude (Anthropic) to verify the mathematical arguments, to assist in the derivation and drafting of parts of the results, and to improve the exposition; ChatGPT (OpenAI) and Gemini (Google) were additionally consulted for pre-submission checks. The author reviewed, verified and edited all generated content and takes full responsibility for the content of the published article.
	
	
	\vspace{0.5cm} 
	

\end{document}